\documentclass[preprint,10pt]{elsarticle}
\usepackage{amsmath}
\usepackage{amssymb}
\usepackage{amsthm}
\usepackage{mathrsfs}
\usepackage{amsfonts} 
\usepackage{CJK}
\usepackage{multicol}
\usepackage{multirow}
\usepackage{diagbox}
\usepackage{graphicx}
\usepackage{graphics}
\usepackage{subfigure}
\usepackage{lineno} 
\usepackage{caption}

\usepackage[font=scriptsize]{subfig} 
\allowdisplaybreaks 
\usepackage{epstopdf} 
\usepackage{pgf,tikz}
\usepackage{tikz}
\usetikzlibrary{arrows,shapes,snakes,shadows,positioning,automata,patterns}
\usetikzlibrary{trees,decorations.pathmorphing,decorations.markings}
\usetikzlibrary{backgrounds}

\usepackage{ifpdf}
\usepackage{epstopdf}
\usepackage{cases}
\usepackage{array}
\usepackage{textcomp}
\usepackage{stfloats}
\usepackage{url}
\usepackage{verbatim}

\usepackage{color} 
\usepackage{float}
\usepackage{bm}

\newtheorem{rem}{Remark}

\newtheorem{thm}{Theorem}
\newtheorem{cor}{Corollary}
\newtheorem{lem}{Lemma}
\newtheorem{ass}{Assumption}
\newtheorem{defn}{Definition}
\newtheorem{example}{Example}

\newcommand{\R}{\mathbb{R}}

\newcommand{\F}{\mathscr{F}}

\newcommand{\T}{\top}
\newcommand{\define}{:=}
\newcommand{\ve}[1]{\bm{#1}}
\newcommand{\E}{\ensuremath{\mathrm{E}}} 

\DeclareMathOperator*{\col}{col}

\DeclareMathOperator*{\dist}{dist}

\usepackage[linesnumbered,ruled,vlined]{algorithm2e}
\usepackage{algpseudocode}

\textwidth=18cm
\textheight=22.5cm
\topmargin=-1.2cm

\evensidemargin=-1cm
\oddsidemargin=-0.5cm

\journal{Automatica}

\begin{document}
\captionsetup{font={small}}
	
\begin{frontmatter}

\title{Social optimization in noncooperative games under high-level regulation\tnoteref{label1}}

\author[1]{Kaixin Du}\ead{dukx@tongji.edu.cn}   
\author[1,2,3,4]{Min Meng\corref{label2}}\ead{mengmin@tongji.edu.cn}               
\author[5]{Xiaoming Hu}\ead{hu@kth.se}  

\address[1]{Shanghai Research Institute for Intelligent Autonomous Systems, Tongji University, Shanghai 201210, P. R. China}  
\address[2]{Department of Control Science and Engineering, College of Electronics and Information Engineering, Tongji University, Shanghai 201804, P. R. China}
\address[3]{National Key Laboratory of Autonomous Intelligent Unmanned Systems, Shanghai, P.
R. China}          
\address[4]{Frontiers Science Center for Intelligent Autonomous Systems, Ministry of Education, Shanghai, P. R. China}
\address[5]{Optimization and Systems Theory, KTH Royal Institute of Technology, Stockholm 10044, Sweden}
\tnotetext[label1]{This work was partially supported by National Science and Technology Major Project under grant 2022ZD0119702, the National Natural Science Foundation of China under grant 62473293 and 62088101, Shanghai Municipal Science and Technology Major Project under grant 2021SHZDZX0100. ({\em Corresponding author: Min Meng}).  }
\cortext[label2]{Corresponding author.}

\begin{abstract}
Motivated by the increasing attention to overall social benefits in networked multi-agent systems, this paper investigates an optimization problem building on noncooperative games under high-level  regulation, which can be formulated in a bilevel structure.
Specifically,  the low level consists of a noncooperative game, where each player competes to minimize its own  cost function that depends not only on the strategies of all players, but also on an intervention decision of a regulator located at the high level. Under the intervention of the high-level regulator,  the low-level players aim to seek a Nash equilibrium (NE), which indeed is related to the regulator's decision. Meanwhile, the regulator in the high level  attempts to  achieve the social optimum, that is, to minimize the sum of all players' costs obtained at the NE. This bilevel social optimization problem is proven to be nonconvex and nonsmooth, leading to challenges for solving it effectively, as the exact gradient of cost sum functions may not be available. To address this intricate problem, an inexact zeroth-order algorithm is developed by virtue of the smoothing techniques, allowing for approximating the NE of the low-level game and thus estimating the required gradients. It is rigorously shown that the devised algorithm achieves a  sublinear convergence rate for computing an approximate stationary point of the studied problem.  
Moreover, the sublinear convergence rate in the scenario where the exact equilibrium of the low-level game is available is established. Finally, numerical simulations are conducted to demonstrate the efficiency of theoretical findings.
\end{abstract}

\begin{keyword}
Social optimum, noncooperative games, nonsmooth nonconvex optimization, smoothing techniques, gradient-free method.
\end{keyword}

\end{frontmatter}

\begin{multicols}{2}

\section{Introduction}
Game theory provides a suitable framework for modeling strategic interactions among rational players that aim  to selfishly minimize their private, yet interdependent objective functions. This  competitive setting has been commonly applied to various domains, including  power allocation \cite{sun25Learning}, network congestion control \cite{barrera2014dynamic}, imitation and reinforcement learning \cite{Hierarchical23}, and social networks \cite{mani21quan}. One of most important issues in noncooperative games is to seek Nash equilibria (NEs) or generalized Nash equilibria (GNEs) which eventually dictate stable and satisfactory states wherein no player possesses an incentive for unilateral deviation.

In the past decades, noncooperative games have been extensively explored under different scenarios. Generally speaking, the existing algorithms for seeking NEs of noncooperative games can be grouped into three categories. The simplest setup pertains to unconstrained games \cite{Nguyen23}, wherein each player possesses the liberty to select strategies from the entire real number or vector spaces. Moving towards a slightly more intricate case, games with local uncoupled decision set constraints are considered \cite{FengRR24,Nguyen25,MENG2023110919,Bianchi21}, necessitating players to confine their decisions within predetermined nonempty sets, which are often presumed to be closed and convex. Furthermore, much attention is paid for games involving coupled equality or inequality constraints \cite{meng2022linear,Gadjov21,meng24online, Carnevale2022Tracking-based}. An alternative perspective categorizes methods for seeking NE or GNE into two groups, according to the way how information is exchanged. Specifically, one class is the full-decision information scenario, where each player has access to the decisions taken by all others \cite{belgioioso2018,lei22,Persis20}. Unfortunately, in such a scenario, a central coordinator is typically required to broadcast the data to the network, which is impractical in numerous real-world applications \cite{belgioioso2018,ghaderi2014opinion}. The other case that has gained attention recently is the partial-decision information scenario, where each player is capable of estimating decisions of others based only on local information exchanges with neighbors through a communication network \cite{Nguyen23,MENG2023110919,FengRR24,meng2022linear,Gadjov21,Bianchi21,Nguyen25,meng24online,Carnevale2022Tracking-based}, resulting in fully  distributed NE seeking algorithms.

It should be noted that in noncooperative games, each player acts selfishly and only concerns its individual interest. However, such self-centered behaviors may lead to deterioration in performance of the social cost, that is, the sum of  cost functions of all players. Specially, some existing studies have yielded valuable insights into the detrimental consequences of selfish behaviors on performance degradation in diverse fields, such as supply chains \cite{ye16chain}, resource allocation \cite{johari05}, federated edge learning \cite{pan25bilateral}, and congestion games \cite{Roughgarden02,apt14} by virtue of the metrics of the price of anarchy and the price of stability \cite{Cominetti24,Anshelevich08,Chandan24metho}. A prospective direction of research involves improving social cost while also accommodating the preferences of individual players in noncooperative games. Consequently, a high-level regulator has its purpose to regulate its decision to eusure that the social cost is minimum. For instance, in the context of economic investment problems within network games, the central bank, who wishes to ensure market stability and avoid systemic risk, maximizes the overall social return for all participants by meticulously regulating individual standalone marginal return \cite{Shakarami23}. Similarly, in power systems, the total social resource consumption is minimized through the strategic adjustment of electricity prices \cite{Ma14,sun25Learning}.

To date, there has been some literature on adaptive incentive mechanisms for steering NEs towards a desirable point \cite{Galeotti2020,alpcan09,Maheshwari2021DynamicTF,ratliff21,Maheshwari22}. However, these works were explored from the systems and  control theoretic perspective, and paid attention to sufficient conditions or direct computation of the best targeting intervention for problems exhibiting specific structures.
Moreover, incentive designs for Stackelberg games were investigated in \cite{bacsar2024incentive,Shen2007,sanjari2025incentive}, where leaders attempt to shape the behavior of players by providing incentives. Notably, the leader in such settings are self-interested, as it pursues only its own objective and the optimal performance is ultimately attained from the leader' viewpoint, simultaneously leading to an NE among players. This motivation is often driven by personal interests rather than by the concern of  social benifits.
Recently, combining with the gradient dynamics, the authors in \cite{Shakarami23} studied continuous-time intervention protocols based on limited knowledge, which were rigorously shown to successfully drive actions of players in quadratic network games to the social optimum. For social optimization in nonconvex cooperative aggregative games, a distributed stochastic annealing algorithm was developed in \cite{wang22} without external intervention and was proved to weakly converge. Note that both of \cite{Shakarami23} and \cite{wang22} have not yet derived any result regarding the convergence rate. Furthermore, most of the aforementioned works did not consider games with set constraints and intervention constraints.  In \cite{liu22}, a bilevel incentive design algorithm is devised for probability simplex constrained games and the sublinear  convergence rate  is provided.

It is emphasized that the exploration of social optima in noncooperative games is toward the  global objective of a social optimization problem, thereby facilitating the high-level regulator in further improving resource allocation efficiency. From a theoretical viewpoint, when each player's  cost  function depends on its own strategy and those of other players, as well as a decision determined by the high-level regulator, the resulting situation extends beyond the scope of traditional noncooperative games, shifting the focus towards attaining a global optimal objective. The problem becomes increasingly complex and  challenging due to its inherent bilevel structure, particularly, the mutual coupling of decision-making  process between players and the high-level regulator. Therefore, achieving the social optimum in general noncooperative games remains an open and challenging issue.

Motivated by the above analysis, to overcome the obstacle that the behaviors of selfish players proposing to minimize their own costs may lead to the social cost not optimal, this paper investigates the social optimization problem, which inherently features a bilevel structure. In the low level, players participate in a  noncooperative game, where each individual cost function relies not only on all players' strategies but also on an intervention decision of a high-level regulator. Under the intervention of the regulator, all players endeavor to seek the NE corresponding to the regulator's decision. Then, in the high level, the  regulator tries to find an optimal decision so as to achieve the social optimum. The significant challenges arise from the lack of a closed-form solution for the low-level game, along with the inherent nondifferentiability and nonconvexity of the high-level function. An inexact gradient-free method by utilizing smoothing techniques is devised to solve this problem and elaborately analyzed to exhibit sublinear convergence rate. The main contributions are summarized as follows.
\begin{itemize}
  \item [i)]A new social optimization framework based on noncooperative games is formulated which is inherently equipped with a bilevel structure, where the strategies of low-level players and the decisions of the high-level regulator are intricately and mutually coupled. Although the related problems regarding social optimization were investigated in \cite{wang22,Shakarami23,liu22}, the problem settings are rather different. Specifically, there was no intervention involved in \cite{wang22}. In fact, players are reluctant to sacrifice their personal wellbeing to move toward the social optimum without external intervention. Moreover, to design  interventions for special quadratic network games \cite{Shakarami23}, the regulator is presumed to be aware of the social optimum in advance. However, this is often difficult or even impossible in reality.  Therefore, our concerned problem is more general and practical.

      \item [ii)] The formulated problem is indeed constrained, nonsmooth, and nonconvex, leading to difficulties in problem solving.  This is also the exact gradient of cost sum functions may not be available.  By  employing the randomized smoothness and Moreau smoothness techniques, the studied problem is transformed into a corresponding smoothed unconstrained optimization problem in form.  Then a zeroth-order algorithm is proposed, which enables the low-level game to be solved in an inexact manner to contend with the unavailable exact equilibrium of the low-level game. The devised algorithm utilizes random gradient-free oracles instead of relying on direct gradient information, and does not require the exact equilibrium of the low-level game. As a result, the algorithm becomes more practical and flexible in scenarios where obtaining gradients of the objective functions is infeasible or the exact NE is prohibitively expensive to evaluate.
      \item [iii)] Through rigorous analysis, in the case where the high-level regulator cannot obtain the NE of the low-level game, 
           the sublinear convergence rate is established for computing  an approximate  stationary point of the concerned problem within the inexact regime.
          Furthermore, the sublinear rate is also derived in the scenario where the exact NE of the low-level game is available. However, the convergence rate of algorithms for social optimization in \cite{Shakarami23,wang22} was not established.
\end{itemize}

The rest of this paper is outlined as follows.  Section \ref{sec2} formulates the problem and introduces the  preliminaries. A zeroth-order algorithm is provided in Section \ref{sec3}, followed by the convergence results. In Section \ref{sec4}, numerical simulations are given to illustrate the theoretical findings. Section \ref{sec5} makes a brief conclusion.

\emph{Notations:} $\R^n$ and $\R^{m\times n}$ denote the sets of $n$-dimensional real column vectors and $m\times n$ real matrices, respectively, endowed with Euclidean norm $\|\cdot\|$. Denote by $M^{\T}$ the transpose of a matrix or vector $M$. $I_N\in\R^{N\times N}$ represents the $N\times N$ identity matrix.  $\ve{1}_N:=(1\ 1 \ldots 1)^{\T}\in\R^N$. $\col\{x_i\}_{i=1}^N:=(x_1^{\T},\ldots,x_N^{\T})^{\T}$. For an integer $m>0$, $[m]:=\{1,\ldots,m\}$. $\otimes$ denotes the Kronecker product. For two matrices $P,Q$, $P\succ Q$ represents $P-Q$ is positive definite. $\lceil a\rceil$ is the smallest integer greater than or equal to scalar $a$. Denote by $\E[\cdot]$ the expectation of a random variable. Given a closed convex set $C\subseteq\R^n$, $\Pi_C(x):=\arg\min_{y\in C}\|x-y\|$ stands for the projection of $x\in\R^n$ onto $C$, $\dist(x,C):=\min_{y\in C}\|x-y\|$ denotes the distance of $x$ to $C$, and  $\mathrm{N}_C(x):=\{s\in\R^n: s^{\T}(y-x)\leq 0, \forall y\in C\}$ represents the normal cone of $x\in C$ at $C$.  For real-valued sequences $\{a_k\}$ and $\{b_k\}$, $a_k = \mathcal{O}(b_k)$ if there exists $C>0$ satisfying $a_k \leq C b_k$. Let $f:\R^n\to(-\infty,\infty]$ be a proper function, the Clarke generalized gradient of $f$ at $x$ is denoted as $\partial f(x) :=\{w \in \mathbb{R}^n: f^{\circ}(x, d) \geq w^{\T}d,  \forall d \in \mathbb{R}^n\}$, where $f^{\circ}(x, d):= \limsup _{z \to x, t \downarrow 0}(f(z+t d)-f(z)) / t$ is the generalized directional derivative of $f$ at $x$ in a given direction $d$. Let $\mathbb{S}^n:=\{u\in\R^n:\|u\|=1$\} and $\mathbb{B}^n:=\{\nu\in\R^n: \|\nu\|\leq 1\}$ be $n$-dimensional unit ball and its surface, respectively. $\sigma_{N-1} (A)$ denotes the second largest singular value of matrix $A\in\R^{N\times N}$. For a function $f=(f_1,f_2,\ldots,f_d)$ with $f_i:\mathbb{R}^n\to\mathbb{R}$ for $i=1,2,\ldots,n$, denote $\nabla f(x)\define [\nabla f_1(x),\ldots,\nabla f_d(x)]$.

\section{Problem formulation and preliminaries}\label{sec2}
In this section, the studied problem is first introduced and then some essential preliminaries related to smoothing techniques are presented.

\subsection{Problem formulation}
This paper introduces a novel social optimization framework grounded in noncooperative games consisting of $N$ players located at  low level and a  regulator in the high level. In the low level, each  player performs in a noncooperative game and aims to selfishly minimize its own cost function, dependent on not only the strategies of all players but also an adjustable intervention decision of the high-level regulator. After the high-level regulator makes a decision first, all players response at once simultaneously, aiming to seek the NE, which indeed is related to the regulator's decision. Then the  regulator in the high level seeks to determine an optimal intervention decision to reach the social optimum. Specifically, the schematic sketch of the problem setup is presented in Fig. \ref{fig:pro}, and the further expressions are detailed below.


\begin{figure}[H]
\centering
\includegraphics[width=8.7cm,height=5.7cm]{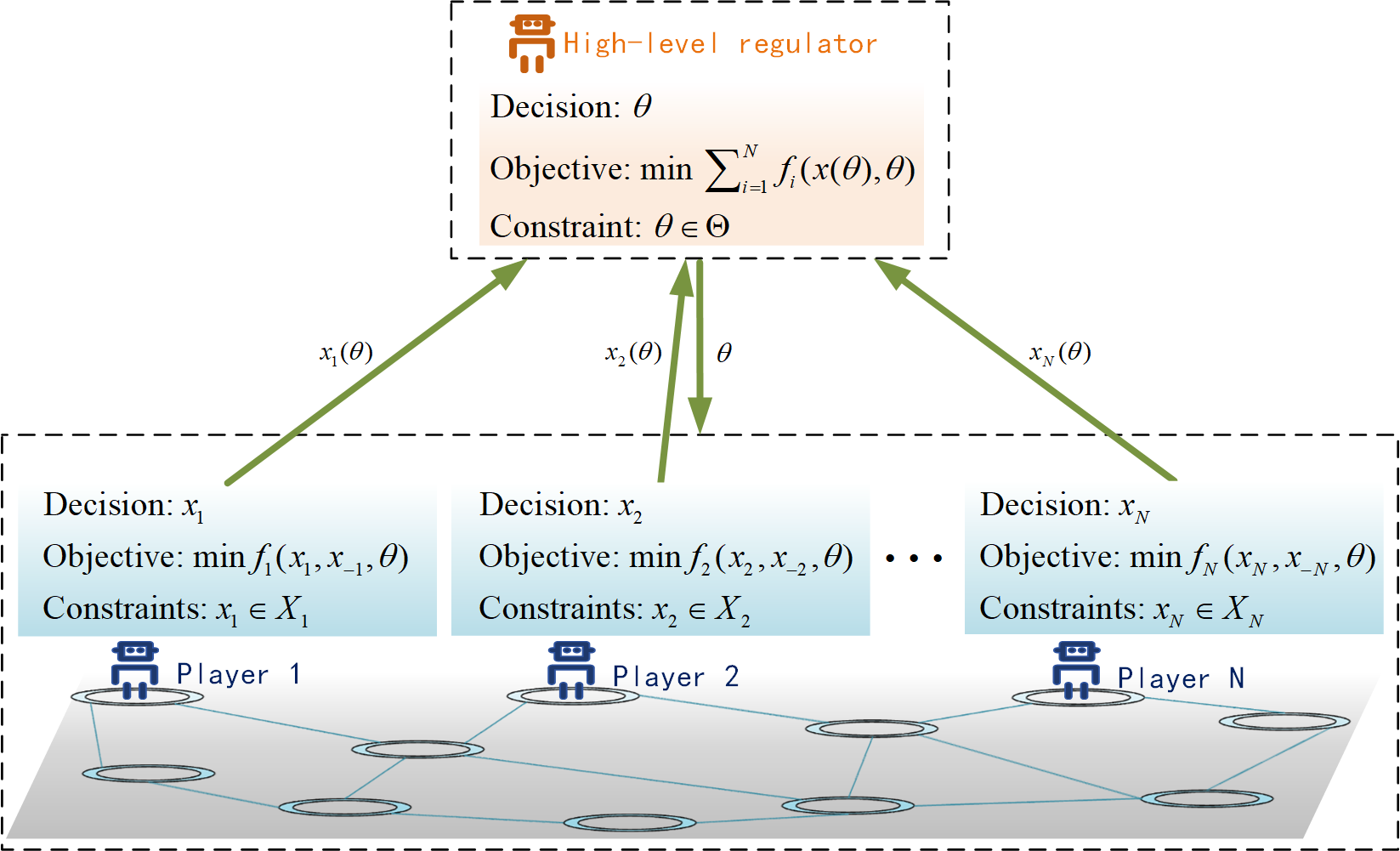}
\caption{Sketch of the problem setup. Each of the $N$ players in the low level aims to minimize its personal cost function $f_i$ given intervention decision $\theta$ of the high-level regulator. Then, the regulator in the high level utilizes required data received from all players to choose an optimal intervention decision to achieve social optimum. Here, $x_{-i}:=\col\{x_i\}_{i\in[N]\backslash\{i\}}$ and $x(\theta):=\col\{x_i(\theta)\}_{i\in[N]}$   represents the NE of the low-level game.} \label{fig:pro}
\end{figure}

\noindent $\textbf{Low-level game.}$ Each player $i\in[N]$ takes strategy $x_i$ from its strategy set $X_i\subseteq\R^{d_i}$. Let $x=\col\{x_i\}_{i\in[N]}\in X:=X_1\times\cdots\times X_N\subseteq\R^d$ with $d=\sum_{i=1}^Nd_i$ and $x_{-i}=\col\{x_i\}_{i\in[N]\backslash\{i\}}\in {X_{-i}}:=X_1\times\cdots\times X_{i-1}\times X_{i+1}\times\cdots \times X_N\subseteq\R^{d-d_i}$ denote, respectively, the joint strategy  of all players and the joint strategy of all other players except $i$. The cost function of player $i$ is $f_i(x_i,x_{-i},\theta)$ which depends on $x_i$, $x_{-i}$ and the high-level regulator's intervention decision $\theta$ selected from a feasible set $\Theta\subseteq\R^n$. Herein, each player is not willing to share its own strategy and objective function with other players for the reason of privacy, and instead only has access to the information of partial players through a communication graph. Given a current decision $\theta$ made by the high-level regulator, players select their strategies with the aim of minimizing individual objective functions $f_i(x_i,x_{-i},\theta)$, $i\in[N]$, to seek the NE that is in their best interest. This leads to a collection of interdependent optimization problems:
\begin{align}\label{pro:game}
\min_{x_i\in X_i} f_i(x_i,x_{-i},\theta), \forall i\in[N].
\end{align}
In this study, we limit our focus to the case where the low-level game (\ref{pro:game}) admits a unique NE for any particular parametrization $\theta\in\Theta$. Upon completion of the game process, an NE $x(\theta)=(x(\theta),x_{-i}(\theta))$ is obtained and defined in the following, which corresponds to a set of costs  of players:  $f_i(x_i(\theta),x_{-i}(\theta),\theta), i\in[N]$.

\begin{defn}\label{defn:NE}
For any $\theta\in\R^n$, a strategy profile $x(\theta)=(x_i(\theta),x_{-i}(\theta))\in X$ is an NE for game (\ref{pro:game}) if for every player $i\in[N]$, it holds that
\begin{align}\label{def:NE}
f_i(x_i(\theta),x_{-i}(\theta),\theta)\leq f_i(x_i,x_{-i}(\theta),\theta), \forall x_i\in X_i.
\end{align}
\end{defn}

\noindent $\textbf{High-level optimization.}$  After receiving all players' feedback $x_i(\theta)$, $i\in[N]$, where $x(\theta)$ denotes the NE of the low-level game \eqref{defn:NE},  the high-level regulator endeavors to determine the optimal decision $\theta^*$ that minimizes the sum of cost functions of all players, thereby achieving the social optimum.  

Although  the information on the players is available to the high-level regulator, the decisions of the  regulator dynamically  depend on the NE of the low-level game, resulting in high complexity and strong coupling. The goal of this paper is to address the following bilevel social optimization problem
\begin{align}\label{prob}
	&\min_{\theta\in\Theta}\sum_{i=1}^N f_i(x_i(\theta),x_{-i}(\theta),\theta),\notag\\
	&\text{s.t.}~f_i(x_i(\theta),x_{-i}(\theta),\theta)\leq f_i(x_i,x_{-i}(\theta),\theta),\notag\\
	&\quad\quad\quad\quad\quad\quad\quad\quad\quad\quad\ \forall i\in[N], x_i\in X_i.
\end{align}

The studied bilevel social optimization probem is inherently nonconvex and nonsmooth, even when each $f_i$  is continuously differentiable and (strong) convex with respect to all its arguments, demonstrated by the following example.
\begin{example}\label{exam-nondiff}
	Consider a noncooperative game with $2$ players. The cost function of player $i$ is $f_i(x_i,x_{-i},\theta)=x_i^2-2x_{-i}-2x_i\theta$ and the strategy set is $X_i=[2/3,1]$, $\forall i\in[2]$. The feasible set of the high-level regulator is $\Theta=[0,1]$.
	Based on the optimal condition (\ref{optcon}), it can be derived that
	\begin{align*}
		x_i(\theta)=
		\left\{
		\begin{aligned}
			&2/3,  & \text{if}\ \theta\in[0,2/3], \\
			&\theta,      & \text{if}\ \theta\in[2/3,1].
		\end{aligned}
		\right.
	\end{align*}
	Through simple calculation, one has
	\begin{align*}
		f(x(\theta),\theta)&=\sum_{i=1}^2f_i(x(\theta),\theta)\\
		&=
		\left\{
		\begin{aligned}
			&-\frac{4}{3}\theta-\frac{8}{9},  & \text{if}\ \theta\in[0,2/3], \\
			&-2\theta^2-4\theta,      & \text{if}\ \theta\in[2/3,1].
		\end{aligned}
		\right.
	\end{align*}
	Obviously, at the point $\theta=2/3$ the mapping $x(\theta)$ and thus $f(x(\theta),\theta)$ are not differentiable.
	In addition, when $\theta\in[2/3,1]$, $f(x(\theta),\theta)=-2\theta^2-4\theta$ is nonconvex. 
\end{example}

\begin{rem}
Note that the key distinction between the social optimization (\ref{prob}) and Stackelberg games studied in \cite{Maljkovic2024OnDC,linon24,bacsar2024incentive,Shen2007,sanjari2025incentive} lies in the role of the high-level leader. Specifically, in the realm of Stackelberg games, the leader pursues the minimization of its own objective while adhering to the equilibrium constraints established by the low-level game among the followers. However, the high-level regulator in social optimization is tasked with optimizing the social cost. Moreover, in  noncooperative games, only players compete with each other to minimize their own cost functions and no other regulators are involved.
Actually, both leaders and players in Stackelberg games and noncooperative games are selfish, as they primarily focus on self-interested behaviors.
Therefore, the studied problem is essentially different from these frameworks since it is driven by the overall benefit to society, with a high-level regulator coordinating the whole network to reach this goal.
\end{rem}

Some standard assumptions are listed next, which are commonly made in existing literature \cite{meng2022linear,Nguyen23,FengRR24,qu24online,Bianchi21,Carnevale2022Tracking-based}.
\begin{ass}\label{ass:set}
$~~~~~~~~~~$
\begin{itemize}
 \item [i)] The set $X_i$ for any $i\in[N]$ is nonempty, compact and convex.
 \item [ii)] The set $\Theta$ is nonempty, closed, and convex.
\item [iii)] For any $i\in[N]$,  $f_i(x_i,x_{-i},\theta)$ is continuously differentiable and convex with respect to $x_i$ for any $x_{-i}\in \R^{d-d_i}$ and $\theta\in\R^n$.
  \item [iv)]  For any  $i\in[N]$, $f_i(x,\theta)$ is $L_\theta$-Lipschitz continuous with respect to $\theta$ for any $x\in X$, i.e., for any $\theta, \theta'\in\R^n$, there exists a constant $L_\theta>0$ such that
      \begin{align}\label{F-lips-3}
      \|f_i(x,\theta)-f_i(x,\theta')\|\leq L_\theta\|\theta-\theta'\|.
     \end{align}
\end{itemize}
\end{ass}
From Assumption \ref{ass:set}-i) and iii), it can be further obtained that for any $i\in[N]$, $\theta\in\R^n$, and $x, x'\in X$,  there exist positive constants $B_X$ and $L_x$ such that
\begin{align}
&\|x\|\leq B_X,\label{F-lips-1}\\
&\|f_i(x,\theta)-f_i(x',\theta)\|\leq L_x\|x-x'\|\label{F-lips-2}.
\end{align}

Given $\theta\in\R^n$, the pseudo-gradient mapping $G: \R^d\times \R^n\to\R^d$ is defined to be
\begin{align}\label{def:PG}
G(x,\theta)=\col\{\nabla_if_i(x_i,x_{-i},\theta)\}_{i\in[N]}.
\end{align}

\begin{ass}\label{ass:pseudo-gradient}
$~~~~~~~~~~$
\begin{itemize}
\item [i)] The pseudo-gradient $G$ is $\mu$-strongly monotone ($\mu>0$) and $l$-Lipschitz continuous ($l>0$) with respect to $x$ for any $\theta\in\R^n$, i.e., for any $x, x'\in \R^d$,
\begin{align}
&(G(x,\theta)-G(x',\theta))^{\T}(x-x')\geq\mu\|x-x'\|^2,\label{ass:F-1}\\
&\|G(x,\theta)-G(x',\theta)\|\leq l\|x-x'\|\label{ass:F-2}.
\end{align}
\item [ii)] $G$ is $l_\theta$-Lipschitz continuous ($l_\theta>0$) with respect to $\theta$ for any $x\in X$, i.e., for any $\theta, \theta'\in \R^n$,
\begin{align}\label{ass:F-3}
\|G(x,\theta)-G(x,\theta')\|\leq l_\theta\|\theta-\theta'\|.
\end{align}
\end{itemize}
\end{ass}

Under Assumption \ref{ass:set}-i) and iii), it follows from Proposition 1.4.2 in \cite{facchinei2003finite} that for any given $\theta\in\R^n$, a solution $x(\theta)\in X$ is an  NE of game (\ref{pro:game}) if and only if
\begin{align}\label{optcon}
{\bf 0}\in G(x(\theta),\theta)+\mathrm{N}_X(x(\theta)).
\end{align}
The existence of the NE is guaranteed from Corollary 2.2.5 in \cite{facchinei2003finite}. Additionally, Assumption \ref{ass:pseudo-gradient}-i) ensures the uniqueness of the  NE by Theorem 2.3.3 in \cite{facchinei2003finite}.

\begin{rem}
Assumption \ref{ass:pseudo-gradient} holds in numerous real-world scenarios. For instance,  $f_i(x_i,x_{-i},\theta)$ takes the form of $g_i(x_i,x_{-i})+x_i^{\T}Q_i\theta$. Assume that the pseudo-gradient of $g_i(x_i,x_{-i})$ is $\mu$-strongly monotone and $l$-Lipschitz, which are rather common in existing literature including economic investment and smart grids fields \cite{Shakarami23,Maheshwari22,Maljkovic23}. 
\end{rem}

The communication {\color{blue}graph among low-level players} is characterized by a directed graph $\mathcal{G}=(\mathcal{V},\mathcal{E},\mathcal{A})$, where $\mathcal{V}=[N]$ is the player set, $\mathcal{E}\subseteq \mathcal{V}\times\mathcal{V}$ is the directed edge set, and $\mathcal{A}=[a_{ij}]\in\R^{N\times N}$ is the adjacency matrix. $a_{ij}>0$ if $(j,i)\in\mathcal{E}$ or $i=j$, and $a_{ij}=0$, otherwise. Here, a directed edge $(j,i)\in\mathcal{E}$ means player $j$ could send information to player $i$. Let  $\mathcal{N}_{i}:=\{j\in[N]:(j,i)\in\mathcal{E}\}$ be the set of  neighbors of player $j$. A directed path from $i_1$ to $i_s$ is defined as a sequence of edges $(i_p,i_{p+1})\in\mathcal{E}, p\in[s-1]$ for different nodes. A directed graph is said to be strongly connected if there is a directed path from every node to any other node. It is assumed that $a_{ii}>0$ for all $i\in[N]$ in this paper. In the following, a standard condition on players' interaction is imposed.
\begin{ass}\label{ass:network-}
$\mathcal{G}$ is strongly connected. The adjacency matrix $\mathcal{A}$ is doubly stochastic, i.e., $\mathcal{A}\ve{1}_N=\mathcal{A}^{\T}\ve{1}_N=\ve{1}_N$.
\end{ass}

Assumption \ref{ass:network-} is a necessary condition for ensuring that players can estimate all other players' strategies. Under Assumption \ref{ass:network-}, there exists  $\bar{\sigma}\in (0,1)$ such that $\sigma_{N-1} (\mathcal{A})\leq \bar{\sigma}$.

Generally speaking, by virtue of the properties of objective functions and constraint sets of the players, various methods can be employed to compute the NE of the low-level game (\ref{pro:game}). Hence, our main focus shifts to addressing the high-level optimization problem in  (\ref{prob}). From the preceding analysis, it is apparent that to minimize the high-level function $f(x(\theta),\theta)$ with respect to $\theta$, where $x(\theta)$ is the NE of the low-level game (\ref{pro:game}),
one needs to firstly solve the low-level game problem (\ref{pro:game}), while its corresponding optimal decision $x(\theta)$ in turn depends on $\theta$. This mutual coupling between $x$ and $\theta$ renders problem (\ref{prob}) intrinsically hard to solve. Naturally, a direct and efficient approach involves  gradient-based methods, in which an implicit gradient of the high-level function $f(x(\theta),\theta)$ is required to be computed by utilizing the chain rule, as follows
\begin{align*}
\nabla f(x(\theta),\theta)&=\nabla x(\theta)\nabla_x f(x(\theta),\theta)+\nabla_\theta f(x(\theta),\theta).
\end{align*}
However, several challenges arise:
\begin{itemize}
\item [i)] For a particular $\theta\in\R^n$, obtaining a closed-form characterization of $x(\theta)$, where $x(\theta)$ is the NE of the low-level game (\ref{pro:game}), is often computationally expensive or even  unavailable unless the functions $f_i$, $i\in[N]$ take a very special form. Consequently, this typically limits the applicability of standard first-order and zeroth-order schemes.
\item [ii)] The computation of implicit gradient $\nabla f(\theta,x(\theta))$ requires not only access to $x(\theta)$ but also the assumption of differentiability of $x(\theta)$. Unfortunately, as demonstrated  by Example \ref{exam-nondiff}, $x(\theta)$ and hence $f(x(\theta),\theta)$ are non-differentiable in general, rendering the gradient-based methods infeasible. 
    As a result, the design of computational methods for addressing problem (\ref{prob}) remains  challenging.
\item [iii)] The function $f(x(\theta),\theta)$ may exhibit nonconvexity in $\theta$, as demonstrated by Example \ref{exam-nondiff}. This, in turn, poses another challenge in developing an effective algorithm with convergence guarantees, especially in establishing the convergence rate.
\end{itemize}

\subsection{Randomized smoothness}

Consider a function $g: \R^n\to\R$. Define the uniform smoothing $\hat{g}$ of $g$ as
\begin{align}\label{def:est-fun}
\hat{g}(z):=\E_{\nu\in\mathbb{B}^n}[g(z+\xi\nu)],\ \forall z\in\R^n,
\end{align}
where $\xi>0$ is the smoothing parameter. Then a two-point sampling gradient estimator is proposed as \cite{yi2021online}
\begin{align}\label{def:onepoint-est}
\hat{\nabla} g(z)=\frac{n}{\xi}(g(z+\xi u)-g(z))u,
\end{align}
where $u\in\mathbb{S}^n$ is a stochastic vector with uniform distribution.

The following lemma shows some properties of $\hat{g}$ and $\hat{\nabla} g$.

\begin{lem}\cite{yi2021online}\label{lem:gradient-app}
~~~~~~~~~~~~~~~~~
\begin{itemize}
\item [i)] $\hat{g}(z)$ is differentiable even when $g$ is not.  Especially, it holds that
\begin{align}\label{lem:gradient-app-exp}
\nabla \hat{g}(z) = \E_{u \in \mathbb{S}^n}[\hat{\nabla}g(z)].
\end{align}
\item [ii)] If $g(z)$ is $L_g$-Lipschitz continuous ($L_g>0$), then $\nabla\hat{g}(z)$ is $\frac{n L_g}{\xi}$-Lipschitz continuous.
Moreover, it holds that
\begin{align}\label{222}
\|\hat{\nabla} g(z)\|\leq nL_g.
\end{align}
\end{itemize}
\end{lem}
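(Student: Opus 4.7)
The plan is to represent $\hat{g}$ as an integral average over the Euclidean ball of radius $\xi$ centered at $z$, and then exploit the divergence theorem to transfer the derivative off $g$ onto the boundary sphere. This is the standard device that makes $\hat{g}$ differentiable even when $g$ is merely Lipschitz.

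First I would rewrite $\hat{g}(z) = \frac{1}{\mathrm{vol}(\xi\mathbb{B}^n)} \int_{z+\xi\mathbb{B}^n} g(w)\,dw$, so that the $z$-dependence sits entirely in the domain of integration. To handle the possible non-differentiability of $g$, the cleanest route is to mollify $g$ by smooth approximants $g_\epsilon$ for which all manipulations below are classical, perform the computation, and then pass $\epsilon\downarrow 0$ by dominated convergence using the uniform Lipschitz bound on the mollifications. For a smooth $g_\epsilon$, applying the divergence theorem coordinatewise converts $\int_{\xi\mathbb{B}^n} \nabla g_\epsilon(z+y)\,dy$ into a surface integral over $\xi\mathbb{S}^n$ with integrand $g_\epsilon(z+y)\,y/\|y\|$. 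Changing variables $y = \xi u$ for $u\in\mathbb{S}^n$ (so that $dS = \xi^{n-1}\,dS_u$) and invoking the ratio $\mathrm{area}(\mathbb{S}^n)/\mathrm{vol}(\mathbb{B}^n) = n$, everything collapses to
\begin{align*}
\nabla \hat{g}(z) = \frac{n}{\xi}\,\E_{u\in\mathbb{S}^n}\bigl[g(z+\xi u)\,u\bigr].
\end{align*}
Since $\E_{u\in\mathbb{S}^n}[u] = \ve{0}$ by symmetry of the sphere, the constant term $-\frac{n}{\xi}g(z)u$ built into the definition of $\hat{\nabla}g(z)$ has zero mean, which gives (\ref{lem:gradient-app-exp}).

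For part ii), the closed-form representation of $\nabla\hat{g}$ immediately implies, for any $z,z'\in\R^n$,
\begin{align*}
\|\nabla\hat{g}(z)-\nabla\hat{g}(z')\| \leq \frac{n}{\xi}\,\E_{u\in\mathbb{S}^n}\bigl[|g(z+\xi u)-g(z'+\xi u)|\,\|u\|\bigr] \leq \frac{nL_g}{\xi}\|z-z'\|,
\end{align*}
using Lipschitz continuity of $g$ and $\|u\|=1$. The pointwise bound (\ref{222}) is then direct: $\|\hat{\nabla}g(z)\| = \frac{n}{\xi}|g(z+\xi u)-g(z)|\,\|u\| \leq \frac{n}{\xi}\cdot L_g\xi = nL_g$.

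The principal technical subtlety is justifying the differentiation under the integral when $g$ is only Lipschitz and not $C^1$; the mollification scheme sketched above is what resolves this cleanly, since it keeps every step (interchange of $\nabla$ and integration, divergence theorem, coordinate change) strictly classical for $g_\epsilon$, and the uniform Lipschitz control then transfers the resulting identity to $g$ in the limit. Everything else in the lemma is essentially algebraic once this formula for $\nabla\hat{g}$ is in hand.
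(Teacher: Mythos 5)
Your proof is correct, and it reproduces the standard divergence-theorem argument (smoothing over the ball, passing the gradient to a surface integral over the sphere, and using $\E_{u\in\mathbb{S}^n}[u]=\ve{0}$ to absorb the $-g(z)u$ term) that underlies the cited source \cite{yi2021online}; the paper itself offers no proof, only the citation. The constants all check out ($\mathrm{area}/\mathrm{vol}=n/\xi$ after rescaling, hence the $\frac{nL_g}{\xi}$ Lipschitz modulus and the $nL_g$ pointwise bound), and the mollification step properly justifies the interchange for merely Lipschitz $g$.
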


\subsection{Moreau Smoothness}
Let $\chi_{\Theta}:\R^n\to\R\cup+\infty$ be an indicator function defined on a nonempty closed and convex set  $\Theta\subseteq\R^n$, that is,
\begin{align*}
&\chi_{\Theta}(\theta):=
\left\{
\begin{aligned}
  &0,  & \text{if}\ \theta\in\Theta, \\
  &+\infty,      & \text{if}\ \theta\notin\Theta.
\end{aligned}
\right.
\end{align*}
The Moreau smoothing  $\hat{\chi}_{\Theta}$ of $\chi_{\Theta}$ is defined as \cite{Beck2017}
\begin{align}\label{def:indismooth}
\hat{\chi}_{\Theta}(\theta):=\frac{1}{2\xi}{\dist}^2(\theta,\Theta),
\end{align}
where $\xi$ is the smoothing parameter.
By Theorem 6.60 in \cite{Beck2017}, $\hat{\chi}_{\Theta}$ is differentiable and
\begin{align*}
\nabla \hat{\chi}_{\Theta}(\theta) = \frac{1}{\xi}(\theta-\Pi_{\Theta}(\theta)).
\end{align*}
Moreover, $\nabla \hat{\chi}_{\Theta}(\theta)$ is $\frac{1}{\xi}$-Lipschitz continuous.

\begin{rem}
Intuitively, the randomized smoothed version $\hat{g}$ and Moreau smoothed variant $\hat{\chi}_{\Theta}$, as substitutes for original functions $g$ and $\chi_{\Theta}$, respectively, are typically introduced to contend with the nonsmoothness, which greatly help to develop a zeroth-order algorithm.
\end{rem}

\section{Main results}\label{sec3}
In this section, a zeroth-order algorithm for addressing the constrained, nonsmooth, and nonconvex problem (\ref{prob}) is developed, which allows for leveraging inexact solutions of the low-level game. Moreover, the convergence rate of the proposed algorithm is rigorously derived.

For notational convenience, define
\begin{align*}
   F_i(\theta):=f_i(x(\theta),\theta)),\ F(\theta):=\sum_{i=1}^{N}F_i(\theta),
\end{align*}
where $x(\theta)$ is the NE of the low-level game \eqref{pro:game}. To proceed, we first note that the constrained optimization problem (\ref{prob}) is  equivalent to
\begin{align}\label{pro:uncon}
\min_{\theta\in\R^n} \mathbf{F}(\theta):=F(\theta)+\chi_{\Theta}(\theta),
\end{align}
where 
\begin{align*}
	&\chi_{\Theta}(\theta):=
	\left\{
	\begin{aligned}
		&0,  & \text{if}\ \theta\in\Theta, \\
		&+\infty,      & \text{if}\ \theta\notin\Theta.
	\end{aligned}
	\right.
\end{align*}
Consider a smoothed approximation of (\ref{pro:uncon}) based on (\ref{def:est-fun}) and (\ref{def:indismooth}), given as
\begin{align}\label{pro:opt-smooth}
\min_{\theta\in\R^n} \hat{\mathbf{F}}(\theta):=\hat{F}(\theta) + \hat{\chi}_{\Theta}(\theta).
\end{align}
where $\hat{F}(\theta):=\sum_{i=1}^{N}\hat{F}_i(\theta)$ with $\hat{F}_i(\theta):=\E_{\nu \in \mathbb{B}^n}[F_i(\theta+\xi \nu)]$ and $\hat{\chi}_{\Theta}(\theta):=\frac{1}{2\xi}{\dist}^2(\theta,\Theta)$ with $\xi>0$ being the smoothing parameter. Denote by $\hat{\mathbf{F}}^*:=\min_{\theta\in\R^n}\hat{\mathbf{F}}(\theta)$ the optimal value of problem (\ref{pro:opt-smooth}).

For nonsmooth and nonconvex optimization problem \eqref{pro:uncon}, it is shown in \cite{zhang2020complexity} that no algorithm could find an $\epsilon$-stationary point, i.e., a point $\theta$ for which $\min\{\|\zeta\|\mid \zeta\in\partial \mathbf{F}(\theta)\} \leq \epsilon$ in finite time.
Hence, as a tractable and reasonable optimality criterion, a relaxed notion ($\delta, \epsilon$)-stationarity is considered for a vector $\theta$ satisfying $\min\{\|\zeta\|\mid \zeta \in \partial_\delta \mathbf{F}(\theta) \}\leq \epsilon$, where the set $\partial_\delta \mathbf{F}(\theta) := \operatorname{conv}\{\zeta\ |\ \zeta \in \partial \mathbf{F}(\theta'),\|\theta'-\theta\| \leq \delta\}$ denotes the $\delta$-Clarke generalized gradient of $\mathbf{F}$ at $\theta$ \cite{gold1977}.

Based on the analysis above, the goal of this paper is to develop a provably convergent algorithm to find a ($\delta, \epsilon$)-stationary point of function $\mathbf{F}$. 
%
Before proceeding, we formalize the relationship between the original nonsmooth problem (\ref{prob}) and its smoothed counterpart (\ref{pro:opt-smooth}).

\begin{lem}\label{lem:clarkegra}
Consider problem (\ref{prob}) and let Assumptions \ref{ass:set} and \ref{ass:pseudo-gradient} be satisfied.
\begin{itemize}
\item [i)] If $\Theta=\R^n$, then for any $\xi>0$, it has $\nabla \hat{F}(\theta) \in \partial_{2 \xi} F(\theta)$.
\item [ii)] For any $\delta>0$, if $\nabla\hat{\mathbf{F}}(\theta)=\ve{0}$ and $\xi\leq\frac{\delta}{\max\{2,nNL_F\}}$, then it holds that $\ve{0}\in\partial_{\delta} \mathbf{F}(\theta)$ and $\|\theta-\Pi_\Theta(\theta)\|\leq \xi nNL_F$.
\end{itemize}
\end{lem}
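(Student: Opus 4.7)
My plan is to prove both parts by combining the randomized-smoothing calculus of Lemma \ref{lem:gradient-app} with the Moreau-envelope formula $\nabla \hat{\chi}_\Theta(\theta) = (\theta - \Pi_\Theta(\theta))/\xi$ and the variational characterization of the projection onto a convex set.

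For part (i), I would start from the representation $\hat{F}(\theta) = \E_{\nu \in \mathbb{B}^n}[F(\theta + \xi\nu)]$. Since $F$ is globally Lipschitz on $\R^n$ (a consequence of Assumption \ref{ass:set}-iv together with the Lipschitz dependence $\theta \mapsto x(\theta)$ induced by Assumption \ref{ass:pseudo-gradient}), Rademacher's theorem guarantees that $\nabla F$ exists almost everywhere, and standard differentiation under the integral sign yields $\nabla \hat F(\theta) = \E_{\nu \in \mathbb{B}^n}[\nabla F(\theta + \xi\nu)]$. Each integrand $\nabla F(\theta + \xi\nu)$ belongs to the Clarke subdifferential $\partial F(\theta + \xi\nu)$ at a point within distance $\xi \leq 2\xi$ of $\theta$, so $\nabla \hat F(\theta)$ is (in the distributional sense) a convex combination of such subgradients and therefore lies in $\partial_{2\xi} F(\theta) = \operatorname{conv}\{\zeta \in \partial F(\theta'): \|\theta' - \theta\| \leq 2\xi\}$.

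For part (ii), the first-order condition $\ve{0} = \nabla \hat{\mathbf F}(\theta) = \nabla \hat F(\theta) + (\theta - \Pi_\Theta(\theta))/\xi$ immediately gives $\nabla \hat F(\theta) = -(\theta - \Pi_\Theta(\theta))/\xi$, hence $\|\theta - \Pi_\Theta(\theta)\| = \xi \|\nabla \hat F(\theta)\|$. Applying Lemma \ref{lem:gradient-app}-ii to each $F_i$ bounds $\|\nabla \hat F_i(\theta)\| = \|\E[\hat\nabla F_i(\theta)]\| \leq \E[\|\hat\nabla F_i(\theta)\|] \leq nL_F$; summing over $i \in [N]$ gives $\|\nabla \hat F(\theta)\| \leq nNL_F$, so $\|\theta - \Pi_\Theta(\theta)\| \leq \xi nNL_F$, which is the second assertion, and the hypothesis $\xi \leq \delta/(nNL_F)$ then forces $\|\theta - \Pi_\Theta(\theta)\| \leq \delta$. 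To conclude $\ve{0} \in \partial_\delta \mathbf F(\theta)$, I combine two ingredients: by part (i) and $\xi \leq \delta/2$, $\nabla \hat F(\theta) \in \partial_{2\xi}F(\theta) \subseteq \partial_\delta F(\theta)$; and with $\theta' := \Pi_\Theta(\theta) \in \Theta$ satisfying $\|\theta' - \theta\| \leq \delta$, the projection identity $\theta - \theta' \in N_\Theta(\theta')$ together with the conic nature of $N_\Theta$ gives $-\nabla \hat F(\theta) = (\theta - \theta')/\xi \in N_\Theta(\theta')$. Invoking Clarke's sum rule $\partial \mathbf F(\theta') = \partial F(\theta') + N_\Theta(\theta')$, I then realize $\ve{0} = \nabla \hat F(\theta) + (-\nabla \hat F(\theta))$ as an element of the convex hull $\partial_\delta \mathbf F(\theta)$.

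The principal technical obstacle is this final identification. The representation of $\nabla \hat F(\theta)$ from part (i) is built from Clarke subgradients of $F$ at support points $\theta''_j$ that need not lie in $\Theta$, whereas the normal-cone contribution is anchored at the point $\theta' \in \Theta$. Expressing $\ve{0}$ as a genuine convex combination of elements of the form $\eta + \beta$ with $\eta \in \partial F(\cdot)$ and $\beta \in N_\Theta(\cdot)$ at common evaluation points therefore requires either transporting each $\theta''_j$ to $\Theta$ by projection (using the Lipschitz continuity of $F$ on $\R^n$ to control the resulting perturbation of the subgradients) or invoking an extended calculus for the $\delta$-Clarke subdifferential of the sum with an indicator; in either route, the delicate verification is that all evaluation points remain within the $\delta$-neighborhood of $\theta$, which is precisely what the two conditions $\xi \leq \delta/2$ and $\xi \leq \delta/(nNL_F)$ bundled in $\xi \leq \delta/\max\{2,nNL_F\}$ are designed to guarantee.
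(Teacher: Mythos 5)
Your argument follows the paper's proof essentially step for step. Part~(i) is exactly the content of the result the paper simply cites (Proposition~2 of \cite{may84}): Rademacher's theorem plus differentiation under the integral, with the minor caveat that an integral average a priori lands in the \emph{closed} convex hull of the subgradients, a point handled in the cited reference via compactness of the $\delta$-subdifferential. Part~(ii) uses the same three ingredients as the paper: the first-order identity $\nabla\hat F(\theta)=-\frac{1}{\xi}(\theta-\Pi_\Theta(\theta))$; the bound $\|\nabla\hat F(\theta)\|\le nNL_F$, which the paper obtains from Lemma~\ref{lem:gradient-app}-i), Jensen's inequality and the $L_F$-Lipschitz continuity of each $F_i$ from Lemma~\ref{lem:conti} (equivalent to your route through \eqref{222}); and the projection inequality $(\theta-\Pi_\Theta(\theta))^{\T}(\tilde\theta-\Pi_\Theta(\theta))\le 0$ to place $\frac{1}{\xi}(\theta-\Pi_\Theta(\theta))$ in $\mathrm{N}_\Theta(\Pi_\Theta(\theta))=\partial\chi_\Theta(\Pi_\Theta(\theta))$ and hence, since $\|\theta-\Pi_\Theta(\theta)\|\le\xi nNL_F\le\delta$, in $\partial_\delta\chi_\Theta(\theta)$.

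The ``principal technical obstacle'' you flag at the end is genuine, but you should know that the paper does not resolve it either: after establishing $\nabla\hat F(\theta)\in\partial_\delta F(\theta)$, the paper declares that ``it suffices to verify'' $\frac{1}{\xi}(\theta-\Pi_\Theta(\theta))\in\partial_\delta\chi_\Theta(\theta)$, which silently invokes the containment $\partial_\delta F(\theta)+\partial_\delta\chi_\Theta(\theta)\subseteq\partial_\delta\mathbf F(\theta)$. That containment is not automatic, for exactly the reason you give: the two $\delta$-subdifferentials aggregate Clarke subgradients anchored at \emph{different} base points (the support points $\theta+\xi\nu$ of the smoothing need not lie in $\Theta$, while the normal-cone element is anchored at $\Pi_\Theta(\theta)$), and even at a common point the inclusion $\partial F(\theta')+\partial\chi_\Theta(\theta')\subseteq\partial(F+\chi_\Theta)(\theta')$ is the non-generic direction of the Clarke sum rule (your appeal to equality in that rule also requires a regularity hypothesis not assumed here). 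So your instinct --- either transport the support points into $\Theta$ and control the perturbation, or adopt an additively defined approximate-stationarity measure as in \cite{Qiu2023} --- is the right way to make the statement rigorous; as written, your proof and the paper's stop at the same place.
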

\begin{proof}
See Appendix \ref{proof-lem:clarkegra}.
\end{proof}

According to Lemma \ref{lem:clarkegra}, any stationary point of the $\xi$-smoothed problem satisfies an approximate stationary property for the original problem. Furthermore, the distance between the stationary point and the feasible set $\Theta$ is $\mathcal{O}(\xi)$. These findings lay the foundation for developing effective schemes to compute approximate stationary points of minimization of $\mathbf{F}$ in nonconvex and nonsmooth regimes.

Next, let us proceed to the algorithm design. In order to deal with the smoothed problem (\ref{pro:opt-smooth}), by virtue of Lemma \ref{lem:gradient-app}, for every $i\in[N]$, a two point sampling gradient approximation of $\hat{F}_i(\theta)$ is
\begin{align*}
\hat{\nabla} F_i(\theta)=\frac{n}{\xi}(f_i(x(\theta+\xi u),\theta+\xi u)-f_i(x(\theta),\theta))u.
\end{align*}
Hence,
\begin{align}\label{alg:exact-gra}
\hat{\nabla} F(\theta)&= \sum_{i=1}^N\hat{\nabla} F_i(\theta)\notag\\
&=\sum_{i=1}^N\frac{n}{\xi}(f_i(x(\theta+\xi u),\theta+\xi u)-f_i(x(\theta),\theta))u.
\end{align}
Based on the preceding discussion, it is impractical to obtain the exact solution $x(\theta)$,  where $x(\theta)$ is the NE of the low-level game (\ref{pro:game}). We carefully address this challenge by introducing inexact evaluation of $x(\theta)$, where players are allowed to compute an $\varepsilon$-approximation solution $x_\varepsilon(\theta)$ satisfying
\begin{align}\label{defn:error}
\E[\|x_\varepsilon(\theta)-x(\theta)\|^2|\theta]\leq\varepsilon.
\end{align}
As a consequence, one needs to consider the inexact zeroth-order gradient, denoted by
\begin{align*}
\hat{\nabla}_{\varepsilon} F_i(\theta)=\frac{n}{\xi}(f_i(x_{\varepsilon}(\theta+\xi u),\theta+\xi u)-f_i(x_{\varepsilon}(\theta),\theta))u.
\end{align*}

\begin{algorithm}[H]
	\SetAlgoLined
	\SetAlgoNoLine
	\SetAlgoNlRelativeSize{-1}
	\SetNlSty{textbf}{}{}
	\caption{Inexact Smoothing-Enabled Zeroth-Order Method for Social Optimization}
	\label{alg:nonconvex}
	\KwIn{Stepsize $\alpha>0$, smoothing parameter $\xi>0$, and initial vector  $\theta_0 \in \Theta$.}
	\For{$k = 0,1,\ldots,K-1$}{
        Select vector $u_k\in\mathbb{S}^n$ independently and uniformly at random.\\
         Send $\theta_k+\xi u_k$ and $\theta_k$ to all players.\\
         Apply Algorithm \ref{alg:game} to receive inexact evaluations $x_{\varepsilon_k}(\theta_k+\xi u_k)$ and  $x_{\varepsilon_k}(\theta_k)$  of the corresponding NEs of the low-level game \eqref{pro:game}.\\
        Evaluate the inexact zeroth-order gradient approximation $\hat{\nabla}_{\varepsilon_k} F(\theta_k)$ by (\ref{alg:inexact-gra-}).\\
        Update $\theta_k$ as (\ref{alg:theta-}).\\
	}
\end{algorithm}

\begin{algorithm}[H]
	\SetAlgoLined
	\SetAlgoNoLine
	\SetAlgoNlRelativeSize{-1}
	\SetNlSty{textbf}{}{}
	\caption{ Distributed NE Seeking}
	\label{alg:game}
	\KwIn{Stepsize $\gamma>0$, and initial vector  $x_{i,0} \in X_i$, $ {\bf x}_{i,0}^{-i}\in\R^{d-d_i}$.}
	\For{$t = 0,1,\ldots,t_k-1$}{
		\For{\rm $i = 1, ..., N$ in parallel}{
			Receive $\tilde{\theta}_k$ (that is, $\theta_k+\xi u_k$ or $\theta_k$) from the high-level regulator.\\
			Update
			\begin{subequations}
				\begin{align}
					\hat{{\bf x}}_{i,t}  &= \sum_{j=1}^Na_{ij}{\bf x}_{j,t},\label{alg:x1}\\
					x_{i,t+1} &= \Pi_{X_i}[\hat{{\bf x}}_{i,t}^i-\gamma\nabla_i f_i(\hat{{\bf x}}_{i,t},\tilde{\theta}_k)],\label{alg:x2}\\
					{\bf x}_{i,t+1}^{-i} &= \hat{{\bf x}}_{i,t}^{-i}.\label{alg:x3}
				\end{align}
			\end{subequations}
		}
	}
	Players send $x_{i,t_k}, i\in[N]$  to the high-level regulator.
\end{algorithm}

In a nutshell, an inexact smoothing-enabled zeroth-order method is developed in Algorithm \ref{alg:nonconvex}. At each iteration $k$, firstly, the high-level regulator selects vector $u_k\in\mathbb{S}^n$ independently and  uniformly at random and then sends $\theta_k+\xi u_k$ and $\theta_k$ to all players. Simultaneously, the high-level regulator makes an inexact call to a  distributed NE seeking algorithm, i.e., Algorithm \ref{alg:game}, inspired by \cite{Bianchi21} twice to compute inexact approximations of NEs $x(\theta_k+\xi u_k)$ and $x(\theta_k)$ of the low-level game \eqref{pro:game}, denoted by $x_{\varepsilon_k}(\theta_k+\xi u_k)$ and $x_{\varepsilon_k}(\theta_k)$, respectively.  Specifically,  at time $t$, each player $i$ maintains variables $x_{i,t}$ and ${\bf x}_{i,t}^j$ to represent its own strategy and estimate of player $j$'s strategy, respectively. Define ${\bf x}_{i,t}^i := x_{i,t}$ and also ${\bf x}_{j,t}^{-i}:=\col\{{\bf x}_{j,t}^l\}_{l\in[N]\setminus i}$. Let ${\bf x}_{i,t}:=\col\{{\bf x}_{i,t}^j\}_{j\in[N]}\in\R^{d}$ and ${\bf x}_t:=\col\{{\bf x}_i\}_{i\in[N]}\in\R^{Nd}$.
In the partial-decision information setting, players update their estimates based on the information received from their neighbors by applying dynamic consensus technique (\ref{alg:x1}), and subsequently adjust their own decisions through a gradient descent step (\ref{alg:x2}). Herein, it is mentioned that the partial gradient $\nabla_i f_i$ is evaluated on the local estimate $\hat{{\bf x}}_{i,t}$, while not employing the actual decision $x_{i,t}$.
After receiving the estimated feedbacks $x_{\varepsilon_k}(\tilde{\theta_k}):=\col\{x_{i,t_k}\}_{i\in[N]}$  from the players, where  $\tilde{\theta}_k\in\{\theta_k, \theta_k+\xi u_k \}$,  the inexact zeroth-order gradient $\hat{\nabla}_{\varepsilon_k} F(\theta_k)$ is computed as
\begin{align}\label{alg:inexact-gra-}
\hat{\nabla}_{\varepsilon_k} F(\theta_k)&= \sum_{i=1}^N\hat{\nabla}_{\varepsilon_k} F_i(\theta_k)\notag\\
&=\sum_{i=1}^N\frac{n}{\xi}(f_i(x_{\varepsilon_k}(\theta_k+\xi u_k),\theta_k+\xi u_k)\notag\\
&\quad-f_i(x_{\varepsilon_k}(\theta_k),\theta_k))u_k.
\end{align}
At last, the high-level regulator updates its decision using gradient descent
\begin{align}\label{alg:theta-}
\theta_{k+1}=\theta_k-\alpha (\hat{\nabla}_{\varepsilon_k} F(\theta_k)+\frac{1}{\xi}(\theta_k-\Pi_\Theta(\theta_k))),
\end{align}
where $\alpha>0$ is the step size to be determined.

\begin{rem}
Utilizing inexact estimation of the NE of the low-level game inherently introduces a bias in the approximation of the zeroth-order gradient.  This bias, if uncontrolled, has the potential to undesirably propagate and impact the overall performance of  the algorithm. Notably, the degree of inexactness in $x_{\varepsilon_k}(\tilde{\theta}_k)$ plays a pivotal role in the convergence analysis, which is precisely controlled by the termination criterion $t_k$ specified in Algorithm \ref{alg:game}. This criterion will be formally derived in the subsequent analysis presented in Theorem \ref{thm-main-nonconvex} to establish the convergence rate.
\end{rem}

\begin{rem}
It is worthwhile noting that the regulator in the high-level does not need to acquire all information on the players. In fact, at moment $k+1$, only two function values $f_i(x_{\varepsilon_k}(\theta_k+\xi u_k),\theta_k+\xi u_k)$ and $f_i(x_{\varepsilon_k}(\theta_k),\theta_k)$ are required to be disclosed by the high-level regulator to update its decision $\theta_k$. Moreover, compared to gradient-based algorithms \cite{Maljkovic2024OnDC,linon24,jo2023}, the regulator does not need to compute players' gradient information, making this more aligned with real-world scenarios. In addition, it is noteworthy that in the setting of the  partial-decision information regarding the underlying game, each individual strategy and gradient of the  objective function are not accessible to other players. Consequently, every player must exert extra effort to estimate strategies of other players and hence steps (\ref{alg:x1}) and (\ref{alg:x3}) are introduced in Algorithm \ref{alg:game}. However, during the practical implementation of Algorithm \ref{alg:nonconvex}, it is only necessary to employ simple gradient descent method to update the players' decision variables to obtain approximate solution $x_{\varepsilon_k}(\tilde{\theta}_k)$ of the low-level game, which will simplify Algorithm \ref{alg:nonconvex} greatly.
\end{rem}


Given $\theta\in\R^n$, define the extended pseudo-gradient mapping $\mathbf{G}:\R^{Nd}\times\Theta\to\R^d$ as
\begin{align*}
\mathbf{G}({\bf x},\theta):=\col\{\nabla_if_i(x_i,{\bf x}_i^{-i},\theta)\}_{i\in[N]}.
\end{align*}
It follows from Lemma 1 in  \cite{Bianchi21} that  $\mathbf{G}({\bf x},\theta)$ is Lipschitz continuous with respect to ${\bf x}$,  as stated formally below.
\begin{lem}
Under Assumptions \ref{ass:set}-iii) and \ref{ass:pseudo-gradient}-i), for any $\theta\in\R^n$, the mapping $\mathbf{ G}$  is $l'$-Lipschitz continuous for some $\mu\leq l'\leq l$, that is,
\begin{align*}
\mathbf{G}({\bf x},\theta)-\mathbf{G}({\bf x}',\theta)\leq l'\|{\bf x}-{\bf x}'\|,\ \forall {\bf x}, {\bf x}'\in \R^d.
\end{align*}
\end{lem}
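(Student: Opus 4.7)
The plan is to bootstrap the Lipschitz continuity of $\mathbf{G}$ from the Lipschitz continuity of the ordinary pseudo-gradient $G$ supplied by Assumption \ref{ass:pseudo-gradient}-i). The key observation is that while $G$ takes a common strategy profile $x\in\R^d$ and produces $\col\{\nabla_i f_i(x_i,x_{-i},\theta)\}_{i\in[N]}$, the extended operator $\mathbf{G}$ feeds each block $\nabla_i f_i$ with its own private vector $\mathbf{x}_i=(\mathbf{x}_i^i,\mathbf{x}_i^{-i})\in\R^d$. Hence the analysis should reduce to a blockwise Lipschitz estimate for a single partial gradient $\nabla_i f_i(\cdot,\theta)$ viewed as a map on $\R^d$.

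First, I would establish a per-block Lipschitz bound. Using the definition (\ref{def:PG}), for any two points $y,y'\in\R^d$ and any fixed $i\in[N]$ one has
\begin{equation*}
\|\nabla_i f_i(y,\theta)-\nabla_i f_i(y',\theta)\|^2 \;\le\; \sum_{j=1}^N \|\nabla_j f_j(y,\theta)-\nabla_j f_j(y',\theta)\|^2 \;=\; \|G(y,\theta)-G(y',\theta)\|^2,
\end{equation*}
which, combined with (\ref{ass:F-2}), yields $\|\nabla_i f_i(y,\theta)-\nabla_i f_i(y',\theta)\|\le l\|y-y'\|$ for every $i$. Second, applying this block bound with $y=\mathbf{x}_i$ and $y'=\mathbf{x}_i'$ and summing,
\begin{equation*}
\|\mathbf{G}(\mathbf{x},\theta)-\mathbf{G}(\mathbf{x}',\theta)\|^2 \;=\; \sum_{i=1}^N\|\nabla_i f_i(\mathbf{x}_i,\theta)-\nabla_i f_i(\mathbf{x}_i',\theta)\|^2 \;\le\; l^2\sum_{i=1}^N\|\mathbf{x}_i-\mathbf{x}_i'\|^2 \;=\; l^2\|\mathbf{x}-\mathbf{x}'\|^2,
\end{equation*}
so $\mathbf{G}$ is Lipschitz with constant no larger than $l$.

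To close the sandwich $\mu\le l'\le l$, I would invoke the standard fact that strong monotonicity forces every Lipschitz constant of $G$ to dominate $\mu$: combining (\ref{ass:F-1}) with Cauchy--Schwarz gives $\|G(x,\theta)-G(x',\theta)\|\cdot\|x-x'\|\ge \mu\|x-x'\|^2$, hence $l\ge\mu$. Choosing $l'=l$ (or more precisely the smallest Lipschitz constant of $\mathbf{G}$, which is at most $l$) therefore lies in $[\mu,l]$, delivering the stated conclusion.

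The argument is essentially routine, and the only subtle point is the first step: one must recognise that the Lipschitz estimate on the full stacked operator $G$ transfers to each individual block $\nabla_i f_i$ by monotonicity of the Euclidean norm on sub-blocks. Once that is in hand, the rest is summation plus the strong-monotonicity/Lipschitz comparison. I therefore anticipate no genuine obstacle beyond writing down that per-block extraction cleanly.
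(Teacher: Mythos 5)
Your proof is correct. The paper itself offers no argument here: it simply invokes Lemma~1 of \cite{Bianchi21} and restates the conclusion. Your derivation is therefore a self-contained replacement, and its one genuinely nontrivial step --- extracting the per-block bound $\|\nabla_i f_i(y,\theta)-\nabla_i f_i(y',\theta)\|\leq l\|y-y'\|$ from the Lipschitz bound (\ref{ass:F-2}) on the stacked operator $G$, which is needed because Assumption \ref{ass:pseudo-gradient}-i) only constrains $G$ evaluated at a \emph{common} point while $\mathbf{G}$ feeds each block its own estimate $\mathbf{x}_i\in\R^d$ --- is handled correctly via monotonicity of the Euclidean norm on sub-blocks. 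The summation over $i$ and the identity $\sum_i\|\mathbf{x}_i-\mathbf{x}_i'\|^2=\|\mathbf{x}-\mathbf{x}'\|^2$ then give the constant $l$, and the strong-monotonicity/Cauchy--Schwarz comparison $l\geq\mu$ legitimately places $l'=l$ in the asserted interval $[\mu,l]$ (the statement only requires existence of such an $l'$, so taking the endpoint is fine, even though the cited reference's $l'$ is meant to be the possibly smaller best constant of $\mathbf{G}$ itself). Two cosmetic points: the lemma's displayed inequality is missing a norm on the left-hand side, and the quantifier should read $\mathbf{x},\mathbf{x}'\in\R^{Nd}$ rather than $\R^d$ since $\mathbf{G}$ is defined on $\R^{Nd}\times\Theta$; your argument implicitly, and correctly, works on $\R^{Nd}$.
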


To proceed further analysis, the convergence result of  Algorithm \ref{alg:game} for resolving the low-level game (\ref{pro:game}) is presented.
\begin{lem}\label{thm:game}
Suppose that Assumptions \ref{ass:set}-i), iii), \ref{ass:pseudo-gradient}-i), and \ref{ass:network-} hold. Let $\{x_t\}_{t=0}^{t_k}$ be the sequence generated by Algorithm \ref{alg:game} at the $k$-th iteration of Algorithm \ref{alg:nonconvex}. If the stepsize $\gamma>0$ is chosen such that
\begin{align}\label{game_step}
\gamma&<\min\{1,\frac{\bar{\sigma}}{3l},\frac{2\mu}{l^2},\frac{2\mu(1-\bar{\sigma}^2)}{a}\},
\end{align}
with $a:=\bar{\sigma}^2(2ll'+l'^2+4\mu l'+2l^2)+2(l^2l'^2+\mu l'^2+2l^2l'^2)\bar{\sigma}^2+2l^2l'^2\bar{\sigma}^2$, then
\begin{align}\label{thm-gamecon}
\|{\bf x}_{t_k}-{\bf 1}_N\otimes x(\tilde{\theta}_k)\|^2\leq  q^{t_k} \|{\bf x}_0-{\bf 1}_N\otimes x(\tilde{\theta}_k)\|^2,
\end{align}
where $q:=\|Q_\gamma\|\in(0,1)$ with
\begin{equation*}
Q_\gamma=
\left[
\begin{array}{ccc}
    1-\frac{2\gamma\mu}{N}+\frac{\gamma^2l^2}{N} & \frac{\gamma(l+l')+\gamma^2ll'}{\sqrt{N}}\bar{\sigma}\\
    \frac{\gamma(l+l')+\gamma^2ll'}{\sqrt{N}}\bar{\sigma} & (1+2\gamma l+\gamma^2l^2)\bar{\sigma}^2
\end{array}
\right].
\end{equation*}
\end{lem}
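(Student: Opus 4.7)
The plan is to prove the geometric contraction by casting Algorithm \ref{alg:game} as a perturbed fixed-point iteration with target $\mathbf{1}_N \otimes x(\tilde{\theta}_k)$ and tracking the squared error through two orthogonal components whose joint evolution fits the quadratic form induced by $Q_\gamma$, following the framework of \cite{Bianchi21}. Fix the outer index $k$, abbreviate $\bar{\theta} := \tilde{\theta}_k$ and $\bar{x} := x(\bar{\theta})$, and first verify that $\mathbf{1}_N \otimes \bar{x}$ is a fixed point of one iteration of (\ref{alg:x1})--(\ref{alg:x3}): the consensus step preserves it because Assumption \ref{ass:network-} gives $\mathcal{A}\mathbf{1}_N = \mathbf{1}_N$, while the local projection--gradient step preserves each $\bar{x}_i$ by the optimality condition (\ref{optcon}) applied coordinatewise. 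Introducing the average $\bar{x}_t := \tfrac{1}{N}(\mathbf{1}_N^{\T}\otimes I_d)\mathbf{x}_t$ and the consensus residual $\tilde{\mathbf{x}}_t := \mathbf{x}_t - \mathbf{1}_N\otimes\bar{x}_t$ (orthogonal by construction) yields $\|\mathbf{x}_t - \mathbf{1}_N\otimes\bar{x}\|^2 = N r_t^2 + s_t^2$ where $r_t := \|\bar{x}_t - \bar{x}\|$ and $s_t := \|\tilde{\mathbf{x}}_t\|$. Setting $v_t := (\sqrt{N}\,r_t,\,s_t)^{\T}$, the target per-iteration estimate $Nr_{t+1}^2 + s_{t+1}^2 \leq v_t^{\T}Q_\gamma v_t \leq \|Q_\gamma\|\,\|v_t\|^2$ yields (\ref{thm-gamecon}) upon iterating $t_k$ times.

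For the optimization component, only the $i$-th block of $\mathbf{x}_{i,t+1}$ differs from $\hat{\mathbf{x}}_{i,t}$, so $\bar{x}_{t+1}-\bar{x}_t = \tfrac{1}{N}\col\{x_{i,t+1}-\hat{\mathbf{x}}_{i,t}^i\}_i$. Combining the NE fixed-point identity $\bar{x}_i = \Pi_{X_i}[\bar{x}_i - \gamma\nabla_i f_i(\bar{x},\bar{\theta})]$ with nonexpansiveness of $\Pi_{X_i}$ and the additive split $\nabla_i f_i(\hat{\mathbf{x}}_{i,t},\bar{\theta})-\nabla_i f_i(\bar{x},\bar{\theta}) = [\nabla_i f_i(\hat{\mathbf{x}}_{i,t},\bar{\theta})-\nabla_i f_i(\mathbf{1}_N\otimes\bar{x}_t,\bar{\theta})] + [G_i(\bar{x}_t,\bar{\theta}) - G_i(\bar{x},\bar{\theta})]$, I would apply the $l'$-Lipschitz continuity of the extended pseudo-gradient $\mathbf{G}$ (together with the restricted contraction $\|(\mathcal{A}\otimes I_d)\tilde{\mathbf{x}}_t\|\leq\bar{\sigma} s_t$ on the consensus-orthogonal subspace, which uses $\hat{\mathbf{x}}_t - \mathbf{1}_N\otimes\bar{x}_t = (\mathcal{A}\otimes I_d)\tilde{\mathbf{x}}_t$ and Assumption \ref{ass:network-}) and the classical $\mu$-strong-monotonicity/$l$-Lipschitz bound $\|(I-\gamma G)(\bar{x}_t)-(I-\gamma G)(\bar{x})\|^2 \leq (1-2\gamma\mu+\gamma^2 l^2)\|\bar{x}_t-\bar{x}\|^2$ from Assumption \ref{ass:pseudo-gradient}-i). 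Rescaling by $1/N$ from the averaging and a Young-type expansion produce the first-row entries of $Q_\gamma$. For the consensus component, the $\bar{\sigma}$-contraction of $(\mathcal{A}\otimes I_d)$ on the consensus-orthogonal subspace combined with Lipschitz continuity of the local gradient (which bounds the post-consensus gradient perturbation by $\gamma l\sqrt{Nr_t^2+s_t^2}$ through the total-error decomposition) produces the second-row entries, yielding the matrix inequality $Nr_{t+1}^2+s_{t+1}^2 \leq v_t^{\T}Q_\gamma v_t$. The step-size condition (\ref{game_step}) is then precisely what guarantees $q := \|Q_\gamma\|<1$ via Sylvester's criterion applied to $I-Q_\gamma$ (equivalently, $\operatorname{tr}(Q_\gamma)<2$ together with $\det(I-Q_\gamma)>0$), which expand into the listed upper bounds on $\gamma$.

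The main obstacle is the careful bookkeeping required to recover $Q_\gamma$ exactly rather than a looser majorant, particularly the off-diagonal entry $(\gamma(l+l')+\gamma^2 l l')\bar{\sigma}/\sqrt{N}$, which encodes two distinct coupling paths between the consensus and optimization errors: gradient-step leakage into the consensus residual through the $l$-Lipschitz bound on $G$, and consensus contamination of the gradient evaluation through the refined $l'$-Lipschitz bound on $\mathbf{G}$. Keeping $l'$ separate from $l$ (which is legitimate because $\mathbf{G}$ acts block-diagonally on each player's own coordinate while $G$ acts jointly) is essential to avoid inflating the off-diagonal entry into a $2\gamma l\bar{\sigma}/\sqrt{N}+\mathcal{O}(\gamma^2)$ term that would push $\|Q_\gamma\|$ above $1$ under the stated step-size restriction. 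The separation is achieved via Young's inequality applied symmetrically across the two cross terms, and verifying the spectral bound $\|Q_\gamma\|<1$ under (\ref{game_step}) is then a direct algebraic check.
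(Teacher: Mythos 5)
Your proposal reconstructs in detail exactly the argument the paper relies on: the paper's proof of Lemma \ref{thm:game} is a one-line deferral to Theorem 1 and Lemma 2 of \cite{Bianchi21}, whose proof proceeds by the same average/disagreement orthogonal decomposition, the same $2\times 2$ coupling matrix $Q_\gamma$, and the same spectral-norm contraction argument you describe. All the key ingredients you identify (double stochasticity preserving the average and the NE consensus point, the $\bar{\sigma}$-contraction of $\mathcal{A}\otimes I_d$ on the consensus-orthogonal subspace, the strong-monotonicity/Lipschitz bounds from Assumption \ref{ass:pseudo-gradient}-i), and the algebraic check that (\ref{game_step}) forces $q=\|Q_\gamma\|<1$) are precisely those of the cited proof, so the approaches coincide.
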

\begin{proof}
See Appendix \ref{proof-thm:game}.
\end{proof}

It follows from (\ref{thm-gamecon}) in Lemma \ref{thm:game} 
that
\begin{align}\label{defn_error_k}
\varepsilon_k&:=\|x_{t_k}-x(\tilde{\theta}_k)\|^2\notag\\
&\leq q^{t_k}\|{\bf x}_0-{\bf 1}_N\otimes x(\tilde{\theta}_k)\|^2\notag\\
&\leq 2(\|{\bf x}_0\|^2+NB_X^2) q^{t_k},
\end{align}
where the last inequality is derived owing to (\ref{F-lips-1}).
\begin{rem}
As is shown in (\ref{defn_error_k}), there exists a tradeoff between the inexactness of Algorithm \ref{alg:game} and the iteration number $t_k$, which will be carefully specified as discussed later. Besides, different from Theorem 1 in \cite{Bianchi21}, an explicit upper bound on the require stepsize $\gamma$ for eatablishing linear convergence of Algorithm \ref{alg:game} is given here in (\ref{game_step}).
\end{rem}

The following lemma reveals the Lipschitz continuous of the upper-level function $f_i$, which is rather crucial in analyzing the algorithm.
\begin{lem}\label{lem:conti}
	Under Assumptions \ref{ass:set} and \ref{ass:pseudo-gradient}, $f_i(x(\theta),\theta)$ is Lipschitz continuous, i.e., there exists a constant $L_F>0$ such that
	\begin{align}\label{eq:imlip}
		&\|f_i(x(\theta),\theta)-f_i(x(\theta'),\theta')\|\leq L_F\|\theta-\theta'\|, \forall \theta\in\R^n.
	\end{align}
\end{lem}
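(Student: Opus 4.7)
The plan is to establish Lipschitz continuity of the composite map $\theta \mapsto f_i(x(\theta),\theta)$ by splitting the increment with a triangle inequality
\[
\|f_i(x(\theta),\theta)-f_i(x(\theta'),\theta')\|\leq \|f_i(x(\theta),\theta)-f_i(x(\theta'),\theta)\|+\|f_i(x(\theta'),\theta)-f_i(x(\theta'),\theta')\|,
\]
and then bounding each term separately. The second term is immediately controlled by $L_\theta\|\theta-\theta'\|$ via Assumption \ref{ass:set}-iv). The first term is bounded by $L_x\|x(\theta)-x(\theta')\|$ via the Lipschitz continuity of $f_i$ in $x$ asserted in (\ref{F-lips-2}). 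Hence the entire argument reduces to showing that the NE map $\theta\mapsto x(\theta)$ is itself Lipschitz continuous.

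To prove Lipschitz sensitivity of the NE, I would use the variational characterization (\ref{optcon}). For any $\theta,\theta'\in\R^n$, the inclusions $-G(x(\theta),\theta)\in \mathrm{N}_X(x(\theta))$ and $-G(x(\theta'),\theta')\in \mathrm{N}_X(x(\theta'))$ yield the two variational inequalities
\[
G(x(\theta),\theta)^{\T}(x(\theta')-x(\theta))\geq 0,\qquad G(x(\theta'),\theta')^{\T}(x(\theta)-x(\theta'))\geq 0.
\]
Adding them and rearranging gives
\[
(G(x(\theta'),\theta')-G(x(\theta),\theta))^{\T}(x(\theta')-x(\theta))\leq 0.
\]
I would then split $G(x(\theta'),\theta')-G(x(\theta),\theta)$ as $[G(x(\theta'),\theta')-G(x(\theta),\theta')]+[G(x(\theta),\theta')-G(x(\theta),\theta)]$, apply the $\mu$-strong monotonicity (\ref{ass:F-1}) to the first bracket, and apply the Cauchy–Schwarz inequality together with the $l_\theta$-Lipschitz bound (\ref{ass:F-3}) to the second. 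This delivers
\[
\mu\|x(\theta)-x(\theta')\|^2\leq l_\theta\|\theta-\theta'\|\,\|x(\theta)-x(\theta')\|,
\]
hence $\|x(\theta)-x(\theta')\|\leq (l_\theta/\mu)\|\theta-\theta'\|$.

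Combining the two bounds yields the claim with explicit constant $L_F=\frac{L_x l_\theta}{\mu}+L_\theta$. The only subtlety is ensuring that all invoked hypotheses apply at $x(\theta),x(\theta')\in X$: since $X$ is compact by Assumption \ref{ass:set}-i), the Lipschitz bounds in (\ref{F-lips-2}) and (\ref{ass:F-3}) are valid at these points, and Assumption \ref{ass:pseudo-gradient}-i) guarantees both the existence/uniqueness of $x(\theta)$ and the strong monotonicity step that drives the sensitivity bound. I do not anticipate a serious obstacle; the main substantive content is the NE sensitivity estimate, which is a standard variational-inequality perturbation argument, and the rest is a routine triangle-inequality decomposition.
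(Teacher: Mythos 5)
Your proposal is correct and follows essentially the same route as the paper: a triangle-inequality split of the increment using (\ref{F-lips-2}) and Assumption \ref{ass:set}-iv), reduced to the NE sensitivity bound $\|x(\theta)-x(\theta')\|\leq (l_\theta/\mu)\|\theta-\theta'\|$, which both you and the paper derive from the variational characterization (\ref{optcon}) via strong monotonicity and the $l_\theta$-Lipschitz continuity of $G$ in $\theta$, arriving at the same constant $L_F=L_x l_\theta/\mu+L_\theta$. The only cosmetic difference is which intermediate point you use when splitting $G(x(\theta'),\theta')-G(x(\theta),\theta)$; this does not change the argument.
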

\begin{proof}
	See Appendix \ref{proof:lem:conti}.
\end{proof}

Define a natural filtration
\begin{align*}
&\F_0:=\{\emptyset,\Omega\},\notag\\
&\F_k:=\sigma\{\theta_0,u_t:0\leq t\leq k-1\},\ \forall k\geq 1.
\end{align*}
It can be seen that $\theta_k$ generated by Algorithm \ref{alg:nonconvex} is adapted to $\F_k$ and $u_k$ is independent of $\F_k$. Then, the following lemma on properties of the zeroth-order gradient is derived.

\begin{lem}\label{lem-inex-zero-grad}
Under Assumptions \ref{ass:set} and \ref{ass:pseudo-gradient}, there hold
\begin{align}
&\E[\hat{\nabla}F(\theta_k)+\nabla\hat{\chi}_\Theta(\theta_k)|\F_k]= \nabla \hat{\mathbf{F}}(\theta_k),\label{lem-inex-3}\\
&\E[\|\hat{\nabla}_{\varepsilon_k}F(\theta_k)-\hat{\nabla}F(\theta_k)\|^2|\F_k]\leq 4N^2\frac{L_x^2n^2}{\xi^2}\varepsilon_k,\label{lem-inex-4}\\
&\E[\|\hat{\nabla}F(\theta_k)+\nabla\hat{\chi}_\Theta(\theta_k)-\nabla \hat{\mathbf{F}}(\theta_k)\|^2|\F_k]\leq N^2 n^2L_F^2.\label{lem-inex-2}
\end{align}
\end{lem}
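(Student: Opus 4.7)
\textbf{Proof plan for Lemma \ref{lem-inex-zero-grad}.} I will establish the three claims in turn, each relying on the smoothing machinery already developed in Lemma \ref{lem:gradient-app} together with the Lipschitz estimates and the definition of $\varepsilon_k$.

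For the unbiasedness identity \eqref{lem-inex-3}, I will apply Lemma \ref{lem:gradient-app}-i) to each summand: since $\hat F_i(\theta) = \E_{\nu\in\mathbb{B}^n}[F_i(\theta+\xi\nu)]$, one has $\nabla \hat F_i(\theta_k) = \E_{u\in\mathbb{S}^n}[\hat{\nabla}F_i(\theta_k)]$. Summing over $i\in[N]$ yields $\E[\hat{\nabla}F(\theta_k)\mid\F_k]=\nabla\hat F(\theta_k)$, because $u_k$ is independent of $\F_k$ while $\theta_k$ is $\F_k$-measurable. Since $\nabla\hat{\chi}_\Theta(\theta_k)=\xi^{-1}(\theta_k-\Pi_\Theta(\theta_k))$ is itself $\F_k$-measurable and $\hat{\mathbf{F}}=\hat F+\hat\chi_\Theta$, adding this deterministic (given $\F_k$) term to both sides gives \eqref{lem-inex-3}.

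For the inexactness bound \eqref{lem-inex-4}, I will subtract the exact and inexact estimators term-by-term. Writing $\Delta_i(\theta):=f_i(x_{\varepsilon_k}(\theta),\theta)-f_i(x(\theta),\theta)$, I get
\[
\hat{\nabla}_{\varepsilon_k}F(\theta_k)-\hat{\nabla}F(\theta_k)=\tfrac{n}{\xi}\sum_{i=1}^{N}\bigl(\Delta_i(\theta_k+\xi u_k)-\Delta_i(\theta_k)\bigr)u_k.
\]
Using $\|u_k\|=1$ together with the triangle inequality, the Lipschitz property \eqref{F-lips-2} bounds $|\Delta_i(\theta)|\le L_x\|x_{\varepsilon_k}(\theta)-x(\theta)\|$. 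Summing over $i$ contributes a factor $N$; then $(a+b)^2\le 2a^2+2b^2$ and the defining inequality \eqref{defn:error} (applied at both $\theta_k+\xi u_k$ and $\theta_k$) produce the stated bound $4N^2 L_x^2 n^2\xi^{-2}\varepsilon_k$.

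For the variance-type bound \eqref{lem-inex-2}, I first use the identity $\nabla\hat{\mathbf{F}}(\theta_k)=\nabla\hat F(\theta_k)+\nabla\hat{\chi}_\Theta(\theta_k)$ together with the $\F_k$-measurability of $\nabla\hat{\chi}_\Theta(\theta_k)$, which makes the deterministic component cancel:
\[
\hat{\nabla}F(\theta_k)+\nabla\hat{\chi}_\Theta(\theta_k)-\nabla\hat{\mathbf{F}}(\theta_k)=\hat{\nabla}F(\theta_k)-\nabla\hat F(\theta_k).
\]
The conditional variance is dominated by the conditional second moment, so the task reduces to bounding $\E[\|\hat{\nabla}F(\theta_k)\|^2\mid\F_k]$. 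By Lemma \ref{lem:conti} each $F_i$ is $L_F$-Lipschitz, hence Lemma \ref{lem:gradient-app}-ii) (specifically \eqref{222}) gives $\|\hat{\nabla}F_i(\theta_k)\|\le nL_F$. The triangle inequality over the $N$ players then yields $\|\hat{\nabla}F(\theta_k)\|\le NnL_F$ deterministically, which immediately delivers \eqref{lem-inex-2}.

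The only nontrivial bookkeeping lies in part \eqref{lem-inex-4}, where one must carefully separate the two inexactness contributions at $\theta_k+\xi u_k$ and $\theta_k$ and invoke the conditional error bound \eqref{defn:error} uniformly in the query point; beyond that, the three claims are direct consequences of Lemmas \ref{lem:gradient-app} and \ref{lem:conti}.
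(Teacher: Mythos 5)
Your proposal is correct and follows essentially the same route as the paper's proof: unbiasedness via Lemma \ref{lem:gradient-app}-i) plus the $\F_k$-measurability of $\nabla\hat{\chi}_\Theta(\theta_k)$; the inexactness bound via the Lipschitz estimate \eqref{F-lips-2} applied at both query points, with $(a+b)^2\le 2a^2+2b^2$ and the per-player-to-sum aggregation producing the $4N^2$ constant; and the last bound by cancelling the deterministic term, dominating the conditional variance by the second moment, and invoking \eqref{222} with the $L_F$-Lipschitz continuity from Lemma \ref{lem:conti}. The only cosmetic difference is that you apply the triangle inequality over the $N$ players before squaring, whereas the paper squares first and uses the $c_r$-inequality; both yield the same constants.
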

\begin{proof}
See Appendix \ref{proof-lem-inex-zero-grad}.
\end{proof}

Equipped with the previous preparations, it is ready to present the main result of this paper now.

\begin{thm}\label{thm-main-nonconvex}
Suppose that Assumptions \ref{ass:set}-\ref{ass:network-} hold. Let $\{\theta_k\}$ be the sequence generated by Algorithm \ref{alg:nonconvex}, if $\alpha\leq\frac{\xi}{4(nNL_F+1)}$, then it holds
\begin{align}\label{thm-nonconvex}
&\frac{1}{K}\sum_{k=0}^{K-1}\E[\| \nabla\hat{\mathbf{F}}(\theta_k)\|^2]\notag\\
&\leq\frac{4(\hat{\mathbf{F}}(\theta_0)-\hat{\mathbf{F}}^*)}{\alpha K} + 8N^2n^2L_F^2\alpha\frac{nNL_F+1}{\xi}\notag\\
&\quad + \frac{20 N^2L_x^2n^2\sum_{k=0}^{K-1}\varepsilon_k}{\xi^2K}.
\end{align}
\end{thm}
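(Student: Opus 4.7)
The argument follows the standard descent-lemma template for nonconvex stochastic zeroth-order optimization, adapted to simultaneously handle the variance of the two-point estimator and the extra bias introduced by the inexact low-level NE. First I would certify that $\hat{\mathbf{F}}$ is $L$-smooth with $L := (nNL_F + 1)/\xi$: by Lemma \ref{lem:conti} each $F_i$ is $L_F$-Lipschitz, so Lemma \ref{lem:gradient-app}-ii) gives that $\nabla \hat{F}_i$ is $(nL_F/\xi)$-Lipschitz; combined with the $(1/\xi)$-Lipschitz continuity of $\nabla\hat{\chi}_\Theta$, this produces the stated $L$, and the stepsize hypothesis $\alpha \le \xi/(4(nNL_F+1))$ becomes exactly $\alpha \le 1/(4L)$.

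Setting $g_k := \hat{\nabla}_{\varepsilon_k}F(\theta_k) + \nabla\hat{\chi}_\Theta(\theta_k)$ so that $\theta_{k+1} = \theta_k - \alpha g_k$, the descent lemma yields
\begin{align*}
\hat{\mathbf{F}}(\theta_{k+1}) \leq \hat{\mathbf{F}}(\theta_k) - \alpha\langle \nabla\hat{\mathbf{F}}(\theta_k), g_k\rangle + \tfrac{L\alpha^2}{2}\|g_k\|^2.
\end{align*}
Conditioning on $\F_k$ and invoking the unbiasedness relation \eqref{lem-inex-3}, the inner product separates as $-\alpha\|\nabla\hat{\mathbf{F}}(\theta_k)\|^2 - \alpha\langle \nabla\hat{\mathbf{F}}(\theta_k), b_k\rangle$ with bias $b_k := \E[\hat{\nabla}_{\varepsilon_k}F(\theta_k) - \hat{\nabla}F(\theta_k)\mid\F_k]$. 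Young's inequality absorbs the bias via $-\alpha\langle \nabla\hat{\mathbf{F}}(\theta_k), b_k\rangle \leq (\alpha/4)\|\nabla\hat{\mathbf{F}}(\theta_k)\|^2 + \alpha\|b_k\|^2$, and Jensen's inequality combined with \eqref{lem-inex-4} gives $\|b_k\|^2 \leq 4N^2 n^2 L_x^2\varepsilon_k/\xi^2$.

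For the quadratic term I would apply $\|a+b\|^2\le 2\|a\|^2+2\|b\|^2$ twice to split
\begin{align*}
\|g_k\|^2 \leq 4\|\nabla\hat{\mathbf{F}}(\theta_k)\|^2 + 4\bigl\|\hat{\nabla}F(\theta_k) + \nabla\hat{\chi}_\Theta(\theta_k) - \nabla\hat{\mathbf{F}}(\theta_k)\bigr\|^2 + 2\bigl\|\hat{\nabla}_{\varepsilon_k}F(\theta_k) - \hat{\nabla}F(\theta_k)\bigr\|^2,
\end{align*}
so that \eqref{lem-inex-2} and \eqref{lem-inex-4} yield $\E[\|g_k\|^2\mid\F_k] \leq 4\|\nabla\hat{\mathbf{F}}(\theta_k)\|^2 + 4N^2 n^2 L_F^2 + 8N^2 n^2 L_x^2 \varepsilon_k/\xi^2$. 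Collecting these pieces, the coefficient of $\|\nabla\hat{\mathbf{F}}(\theta_k)\|^2$ becomes $-3\alpha/4 + 2L\alpha^2$, which is bounded above by $-\alpha/4$ under the stepsize condition; the deterministic residual contributes $2L\alpha^2 N^2 n^2 L_F^2$, and the $\varepsilon_k$-dependent residual is bounded by $(4\alpha + 4L\alpha^2)N^2 n^2 L_x^2 \varepsilon_k/\xi^2 \leq 5\alpha N^2 n^2 L_x^2 \varepsilon_k/\xi^2$.

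Finally I would take total expectation, telescope $k=0,\dots,K-1$ using $\hat{\mathbf{F}}(\theta_K)\ge \hat{\mathbf{F}}^*$, divide by $\alpha K/4$, and substitute $L=(nNL_F+1)/\xi$ in the residual term (producing the factor $8\alpha(nNL_F+1)/\xi$); this matches the three terms on the right-hand side of \eqref{thm-nonconvex}. The main technical obstacle I foresee is the clean bookkeeping required to keep the $-3\alpha/4 + 2L\alpha^2$ coefficient safely negative while preventing the inexact-NE bias from contaminating the variance term, so that the residual retains its proportionality to $\sum_k \varepsilon_k$; this summable structure is essential because it will later be combined with the geometric decay \eqref{defn_error_k} of $\varepsilon_k$ to force the third term of \eqref{thm-nonconvex} to vanish at the desired sublinear rate.
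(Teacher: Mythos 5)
Your proposal is correct and follows essentially the same route as the paper's proof: the descent lemma with smoothness constant $L=(nNL_F+1)/\xi$, Young's inequality to absorb the inexact-NE bias into $\frac{\alpha}{4}\|\nabla\hat{\mathbf{F}}(\theta_k)\|^2$, the same splitting of the squared gradient estimate, and the same final constants (coefficient $-3\alpha/4+2L\alpha^2\le-\alpha/4$, residuals $2L\alpha^2N^2n^2L_F^2$ and $5\alpha N^2n^2L_x^2\varepsilon_k/\xi^2$). The only cosmetic difference is that you take the conditional expectation first and handle the bias $b_k$ via Jensen's inequality, whereas the paper applies Young's inequality pathwise before taking expectations; the resulting bounds are identical.
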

\begin{proof}
	See Appendix \ref{proof-thm-main-nonconvex}.
\end{proof}

\begin{cor}\label{corollary-1}
Under the same conditions as in Theorem \ref{thm-main-nonconvex}, when $t_k$ in Algorithm \ref{alg:game} is chosen such that $t_k\geq\lceil \frac{-s\ln(k+1)}{\ln q}\rceil$ with $s\in[0,1)$ and $\alpha=\frac{\alpha_0}{\sqrt{K}}$ with $\alpha_0>0$, 
 there holds
\begin{align}\label{cor:1}
\frac{1}{K}\sum_{k=0}^{K-1}\E[\| \nabla\hat{\mathbf{F}}(\theta_k)\|^2]=\mathcal{O}(\frac{1}{\sqrt{K}}+\frac{1}{K^s}).
\end{align}

\end{cor}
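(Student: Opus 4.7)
The plan is to substitute the specified parameters $\alpha = \alpha_0/\sqrt{K}$ and $t_k \geq \lceil -s\ln(k+1)/\ln q\rceil$ directly into the bound (\ref{thm-nonconvex}) of Theorem \ref{thm-main-nonconvex} and then estimate each of its three terms separately. For $K$ large enough, the stepsize restriction $\alpha \leq \xi/(4(nNL_F+1))$ is automatically met, so the theorem applies. With $\alpha = \alpha_0/\sqrt{K}$, the first term $4(\hat{\mathbf{F}}(\theta_0) - \hat{\mathbf{F}}^*)/(\alpha K)$ and the second term $8 N^2 n^2 L_F^2 \alpha (nNL_F + 1)/\xi$ each collapse to $\mathcal{O}(1/\sqrt{K})$, since the remaining factors are problem constants independent of $K$.

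The main technical step is to control the cumulative inexactness term $\frac{20 N^2 L_x^2 n^2}{\xi^2 K}\sum_{k=0}^{K-1} \varepsilon_k$. Starting from (\ref{defn_error_k}), one has $\varepsilon_k \leq 2(\|{\bf x}_0\|^2 + N B_X^2) q^{t_k}$. Because $q \in (0,1)$ forces $\ln q < 0$, the prescribed schedule $t_k \geq -s\ln(k+1)/\ln q$ yields $t_k \ln q \leq -s \ln(k+1)$, hence $q^{t_k} \leq (k+1)^{-s}$. Summing over $k = 0, \dots, K-1$ gives $\sum_{k=0}^{K-1} \varepsilon_k = \mathcal{O}\bigl(\sum_{j=1}^{K} j^{-s}\bigr)$, and a standard integral comparison for $s \in [0,1)$ delivers $\sum_{j=1}^{K} j^{-s} = \mathcal{O}(K^{1-s})$. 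Dividing by $K$ then shows that the third term is $\mathcal{O}(K^{-s})$.

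Adding the three contributions yields $\mathcal{O}(1/\sqrt{K}) + \mathcal{O}(1/\sqrt{K}) + \mathcal{O}(K^{-s}) = \mathcal{O}(1/\sqrt{K} + 1/K^s)$, which is exactly the rate (\ref{cor:1}). The only subtle point---more a scheduling question than a real obstacle---is matching the inner-loop budget $t_k$ of Algorithm \ref{alg:game} to the outer iteration index $k$ so that the accumulated NE-approximation error does not overwhelm the bias-variance contributions; the logarithmic-in-$k$ growth of $t_k$ is precisely what converts the geometric decay rate $q$ of the inner NE seeker into the polynomial tail $(k+1)^{-s}$ on the outer scale, and the restriction $s<1$ is imposed so that $\sum_{j=1}^{K} j^{-s}$ remains genuinely sublinear in $K$, giving the clean $K^{-s}$ form on the right-hand side.
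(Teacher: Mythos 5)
Your proposal is correct and follows essentially the same route as the paper's proof: bound $q^{t_k}\leq (k+1)^{-s}$ from the prescribed inner-loop schedule, use the integral comparison $\sum_{k=0}^{K-1}(k+1)^{-s}\leq K^{1-s}/(1-s)$, and substitute $\alpha=\alpha_0/\sqrt{K}$ into the bound of Theorem~\ref{thm-main-nonconvex}. Your explicit remark that the stepsize condition $\alpha\leq\xi/(4(nNL_F+1))$ holds for $K$ large enough is a small point the paper leaves implicit, but otherwise the arguments coincide.
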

\begin{proof}
See Appendix \ref{proof-corollary-1}.
\end{proof}

\begin{cor}\label{cor-2-}
Under the same conditions as in Theorem \ref{thm-main-nonconvex}, when Algorithm \ref{alg:nonconvex} is executed in an exact way, that is, $\hat{\nabla}_{\varepsilon_k} F(\theta_k)$ is replaced by $\hat{\nabla} F(\theta_k)$ defined in (\ref{alg:exact-gra}) in the fifth step, and $\alpha\leq\frac{\xi}{2(nNL_F+1)}$, the following statement holds
\begin{align}\label{cor:2}
&\frac{1}{K}\sum_{k=0}^{K-1}\E[\| \nabla\hat{\mathbf{F}}(\theta_k)\|^2]\notag\\
&\leq\frac{2(\hat{\mathbf{F}}(\theta_0)-\hat{\mathbf{F}}^*)}{\alpha K} + 2N^2n^2L_F^2\frac{nNL_F+1}{\xi }\alpha.
\end{align}
\end{cor}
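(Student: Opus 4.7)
The plan is to mirror the argument of Theorem \ref{thm-main-nonconvex} in the cleaner exact setting, exploiting the fact that the only stochasticity in the zeroth-order update is now the random direction $u_k$, while the compounded error from an inexact NE disappears. First I would combine Lemma \ref{lem:gradient-app}(ii), applied to each $F_i$ (which is $L_F$-Lipschitz by Lemma \ref{lem:conti}), with the $\frac{1}{\xi}$-Lipschitz property of $\nabla\hat{\chi}_\Theta$ to conclude that $\hat{\mathbf{F}}$ is $L$-smooth with $L:=(nNL_F+1)/\xi$. Under the replacement of $\hat{\nabla}_{\varepsilon_k}F(\theta_k)$ by $\hat{\nabla} F(\theta_k)$, the iteration (\ref{alg:theta-}) becomes a stochastic gradient step in direction $g_k:=\hat{\nabla} F(\theta_k)+\nabla\hat{\chi}_\Theta(\theta_k)$.

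Next, I would apply the standard descent inequality
\begin{align*}
\hat{\mathbf{F}}(\theta_{k+1})\leq \hat{\mathbf{F}}(\theta_k)-\alpha\langle \nabla\hat{\mathbf{F}}(\theta_k),g_k\rangle+\tfrac{L\alpha^2}{2}\|g_k\|^2
\end{align*}
and take conditional expectation on $\F_k$. The unbiasedness identity (\ref{lem-inex-3}) collapses the cross term to $-\alpha\|\nabla\hat{\mathbf{F}}(\theta_k)\|^2$, while the second-moment bound (\ref{lem-inex-2}), combined with $\|g_k\|^2\leq 2\|g_k-\nabla\hat{\mathbf{F}}(\theta_k)\|^2+2\|\nabla\hat{\mathbf{F}}(\theta_k)\|^2$, yields $\E[\|g_k\|^2\mid\F_k]\leq 2\|\nabla\hat{\mathbf{F}}(\theta_k)\|^2+2N^2n^2L_F^2$. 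The stepsize condition $\alpha\leq \xi/(2(nNL_F+1))=1/(2L)$ then guarantees $\alpha-L\alpha^2\geq\alpha/2$, so the coefficients on $\|\nabla\hat{\mathbf{F}}(\theta_k)\|^2$ combine to a net $-\frac{\alpha}{2}\|\nabla\hat{\mathbf{F}}(\theta_k)\|^2$ and the remaining noise contribution is bounded by $L\alpha^2 N^2n^2L_F^2$.

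Finally, I would telescope the resulting recursion across $k=0,\ldots,K-1$, use the tower property together with $\E[\hat{\mathbf{F}}(\theta_K)]\geq \hat{\mathbf{F}}^*$, and divide through by $\alpha K/2$ to produce exactly (\ref{cor:2}). No serious obstacle arises in contrast to Theorem \ref{thm-main-nonconvex}: removing the inexact-NE error eliminates the $\frac{20 N^2 L_x^2 n^2}{\xi^2 K}\sum_k\varepsilon_k$ term, permits the less restrictive stepsize $\alpha\leq\xi/(2(nNL_F+1))$ instead of $\xi/(4(nNL_F+1))$, and tightens the constants on the two surviving terms from $4$ and $8$ to $2$ and $2$. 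The only mildly delicate point is keeping track of the factor-of-two slack between $\alpha(1-L\alpha)$ and $\alpha/2$, which is what allows the constants to match precisely.
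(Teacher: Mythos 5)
Your proposal is correct and follows essentially the same route as the paper: the descent lemma for the $\frac{nNL_F+1}{\xi}$-smooth function $\hat{\mathbf{F}}$, the unbiasedness identity \eqref{lem-inex-3} to collapse the cross term, the variance bound \eqref{lem-inex-2} after splitting $\|g_k\|^2\leq 2\|g_k-\nabla\hat{\mathbf{F}}(\theta_k)\|^2+2\|\nabla\hat{\mathbf{F}}(\theta_k)\|^2$, and the stepsize condition $\alpha\leq 1/(2L)$ to secure a net $-\frac{\alpha}{2}\|\nabla\hat{\mathbf{F}}(\theta_k)\|^2$ before telescoping. The constants you track match \eqref{cor:2} exactly.
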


\begin{proof}
See Appendix \ref{proof-cor-2-}.
\end{proof}

\begin{cor}\label{corollary-3}
Under the same conditions as in corollary \ref{cor-2-}, let $\alpha=\frac{\alpha_0}{\sqrt{K}}$ with $\alpha_0>0$, 
then it holds
\begin{align}\label{cor:3}
\frac{1}{K}\sum_{k=0}^{K-1}\E[\| \nabla\hat{\mathbf{F}}(\theta_k)\|^2]=\mathcal{O}(\frac{1}{\sqrt{K}}).
\end{align}

\end{cor}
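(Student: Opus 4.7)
The plan is a direct substitution argument building on Corollary \ref{cor-2-}, so no new estimates need to be derived. The strategy is to plug the prescribed decaying step size $\alpha = \alpha_0/\sqrt{K}$ into the bound (\ref{cor:2}) and verify that both terms on the right-hand side decay at the advertised rate $\mathcal{O}(1/\sqrt{K})$.

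First, I would check admissibility of the step size. Corollary \ref{cor-2-} requires $\alpha \leq \xi/(2(nNL_F+1))$. With the choice $\alpha = \alpha_0/\sqrt{K}$, this inequality holds whenever
\[
K \;\geq\; \frac{4 \alpha_0^2 (nNL_F+1)^2}{\xi^2},
\]
which is a finite constant independent of $K$ and hence absorbed into the $\mathcal{O}(\cdot)$ notation. So for all sufficiently large $K$, the hypotheses of Corollary \ref{cor-2-} are satisfied and (\ref{cor:2}) applies.

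Next, I would substitute $\alpha = \alpha_0/\sqrt{K}$ directly into the two terms of (\ref{cor:2}). The first term becomes
\[
\frac{2(\hat{\mathbf{F}}(\theta_0)-\hat{\mathbf{F}}^*)}{\alpha K}
\;=\; \frac{2(\hat{\mathbf{F}}(\theta_0)-\hat{\mathbf{F}}^*)}{\alpha_0 \sqrt{K}},
\]
which is clearly $\mathcal{O}(1/\sqrt{K})$ since $\hat{\mathbf{F}}(\theta_0) - \hat{\mathbf{F}}^*$ is a finite constant depending only on the initialization. The second term becomes
\[
2 N^2 n^2 L_F^2 \, \frac{nNL_F+1}{\xi} \cdot \frac{\alpha_0}{\sqrt{K}},
\]
which is again $\mathcal{O}(1/\sqrt{K})$ since all the prefactors $N, n, L_F, \xi, \alpha_0$ are constants independent of $K$. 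Summing the two $\mathcal{O}(1/\sqrt{K})$ contributions yields (\ref{cor:3}).

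There is effectively no obstacle here: once Corollary \ref{cor-2-} is in hand, Corollary \ref{corollary-3} is obtained by an elementary balancing of the two terms in (\ref{cor:2}), with $\alpha \propto 1/\sqrt{K}$ being the minimizing choice (up to constants) of the upper bound of the form $c_1/(\alpha K) + c_2 \alpha$. The only thing worth noting is that, unlike Corollary \ref{corollary-1} which has a separate $1/K^s$ term arising from the inexact NE computation, in the exact regime no such term appears, so the overall rate is cleanly $\mathcal{O}(1/\sqrt{K})$ without needing to take $s \to 1$.
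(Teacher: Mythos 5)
Your proposal is correct and matches the paper's (implicit) argument: Corollary \ref{corollary-3} follows from Corollary \ref{cor-2-} by direct substitution of $\alpha=\alpha_0/\sqrt{K}$ into \eqref{cor:2}, with the step-size admissibility condition absorbed into the requirement that $K$ be sufficiently large. Your added remark on the absence of the $1/K^s$ term in the exact regime is consistent with the paper's discussion.
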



\begin{rem}
Notably, the relations shown in Lemma \ref{lem:clarkegra} together with the derived convergence results above imply that Algorithm \ref{alg:nonconvex} could find a ($\delta, \epsilon$)-stationary point of $\mathbf{F}$.
It can be found from Theorem \ref{thm-main-nonconvex} that there exists a tradeoff between the convergence bound and approximation accuracy, where a larger $\xi$ leads to a tighter bound, while a smaller $\xi$ reduces the approximation error relative to the original problem.
From Corollary \ref{corollary-1}, it is evident that in the inexact regime, with an appropriately selected inner iteration number $t_k$, our proposed algorithm can attain a  sublinear convergence rate of $\mathcal{O}(\frac{1}{\sqrt{K}}+\frac{1}{K^s})$ with $s\in[0,1)$. On the other hand, Lemma  \ref{thm:game} indicates that an increasing number of iteration $t_k$ derives a more accurate inner loop solution, which ultimately results in a faster convergence speed owing to a larger $s$. Specially, the order of magnitude of (\ref{cor:1}) stands at $\mathcal{O}(\frac{1}{\sqrt{K}})$ when choosing $t_k\geq\lceil \frac{-s\ln(k+1)}{\ln q}\rceil$ and $s\in[\frac{1}{2},1)$.
Moreover, according to Corollary \ref{corollary-3}, when the low-level game can be solved exactly, i.e., $\varepsilon_k=0$,\ $\forall k=0,\ldots, K-1$, the convergence rate is also $\mathcal{O}(\frac{1}{\sqrt{K}})$. Note that the bound $\mathcal{O}(\frac{1}{\sqrt{K}})$ recovers the existing  convergence rate of nonconvex nonsmooth optimization in \cite{lin2022gradientfree}.
Additionally, it should be emphasized that by utilizing the continuity of $x(\theta)$, if $\theta_k$ converges to the optimal solution $\theta^*\in\hat{\Theta}^*$, then $x(\theta)$ will correspondingly converges to $x(\theta^*)$.
\end{rem}

\begin{rem}
It is worth mentioning that the studies \cite{Shakarami23,wang22,liu22} investigated related problems concerning the social optimum, however, the problem settings are different. Besides, \cite{wang22} did not consider strategy set constraints or intervention constraints. In \cite{Shakarami23}, a kind of special quadratic games was considered, which possessed good properties on cost functions. Furthermore, both \cite{Shakarami23} and \cite{wang22} did not provide convergence rates of designed algorithms. Although the sublinear convergence rate was established in \cite{liu22} for a bilevel incentive design algorithm, this only applies to probability simplex constrained games and requires calculating the exact gradient of $x(\theta)$.
\end{rem}


\section{Numerical simulation}\label{sec4}


Consider the problem of flexible electric vehicle charging control inspired by \cite{liu2023approximate}.
Specifically, there are $N$ residents in a neighborhood who commute daily using electric vehicles and need to charge their vehicles at a public charging station. The charging process is regulated by a central authority (such as a power company or government) through adjustable price policies $\theta$ within a specified range $\Theta=[\theta_{\min }, \theta_{\max }]$. Due to battery capacity limitations, every resident $i$ has its charging consumption being $x_i\in X_i= [0, x_i^{\max}]$ and aims to minimize its electricity bill, given by $f_i(x_i, x_{-i},\theta):= c_i(x_i-d_i)^2 +  a_ix_i + \lambda_i(x_i-\sum_{i=1}^Nx_i/N)^2 + r x_i\theta$. Herein, $c_i, d_i, a_i, r>0$ are constants, and $\lambda_i$ captures resident $i$'s sensitivity to deviation from average consumption. Moreover, $x_i\theta$  represents the charging cost imposed by the high-level regulator when resident $i$ selects a specific $x_i$. In order to maintain a balanced demand on the power grid demand and reduce total consumption of social resources,  the central authority interacts with residents and is interested in miniming the overall cost of all residents.

It is assumed that there are $10$ residents here. By linking any two distinct players with a probability of $\frac{1}{3}$, we randomly generate a network, whose adjacency matrices are constructed follwing the Metropolis rule. For every $i\in[10]$, set $c_i=4+i$, $d_i=7+2i$, $a_i=10+i$, $\lambda_i=0.1$, and $r=1$. Moreover, let set constraints $X_i=[0,25]^{10}$ and $\Theta=[1, 3]$.
Choose stepsizes $\alpha=0.00001$ and $\gamma=0.01$, smoothing parameter $\xi=0.0001$, and inner iteration number $t_k=\lceil5\ln(k+1)\rceil$. The evolution of residual $|\theta_k-\theta^*|$ versus iteration $k$ is plotted in Fig. \ref{fig:1}, demonstrating that the regulator can find the optimal decision successfully. Furthermore,
Fig. \ref{fig:2} gives rises to the trajectory of $| \nabla\hat{\mathbf{F}}(\theta_k)|$, from which it can be seen that the proposed algorithm converges fast. In addition, changes in each player $i$'s  objective value $f_i(x_{\varepsilon_k}(\theta_k),\theta_k)$, $i\in[10]$ and the regulator's objective value $\sum_{i=1}^{10}f_i(x_{\varepsilon_k}(\theta_k),\theta_k)$ are shown in Fig. \ref{fig:3}, respectively, which provide convergence results for seeking NE and social optimum.

\begin{figure}[H]
\centering
\includegraphics[width=8cm,height=5.5cm]{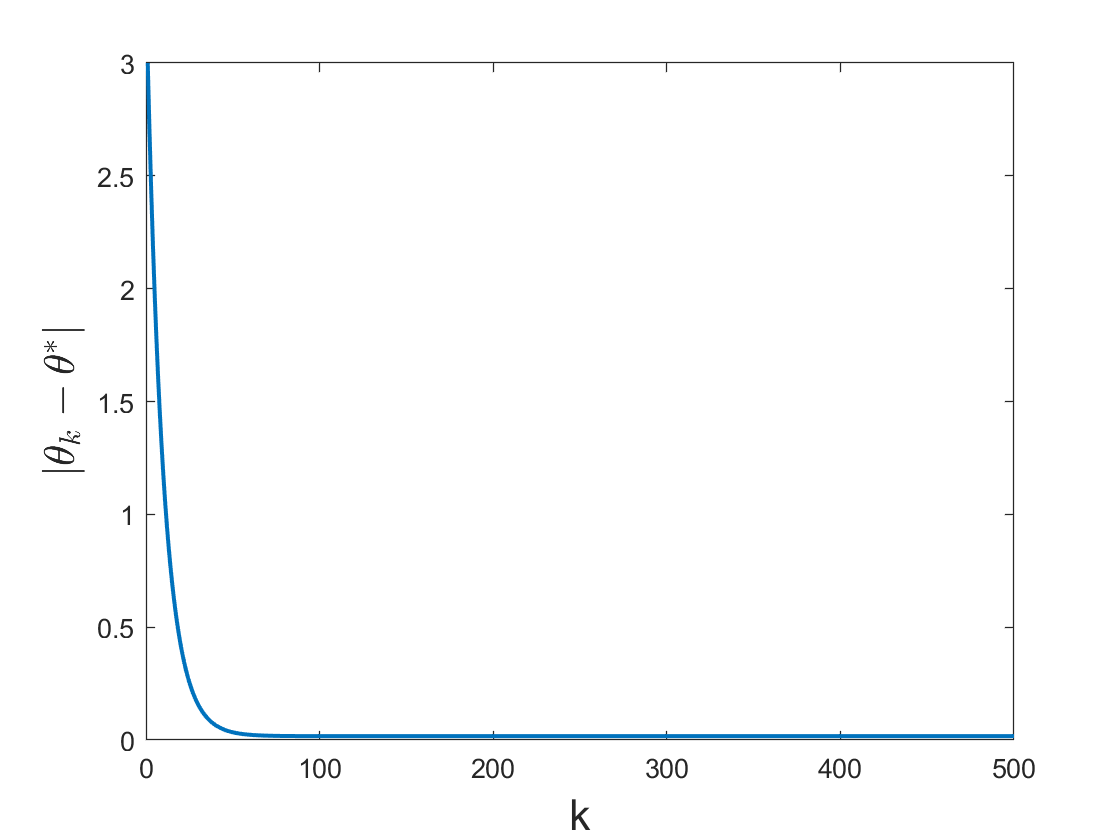}
\caption{The trajectory of $|\theta_k-\theta^*|$.\label{fig:1}}
\end{figure}

\vspace{-0.5cm}

\begin{figure}[H]
\centering
\includegraphics[width=8cm,height=5.5cm]{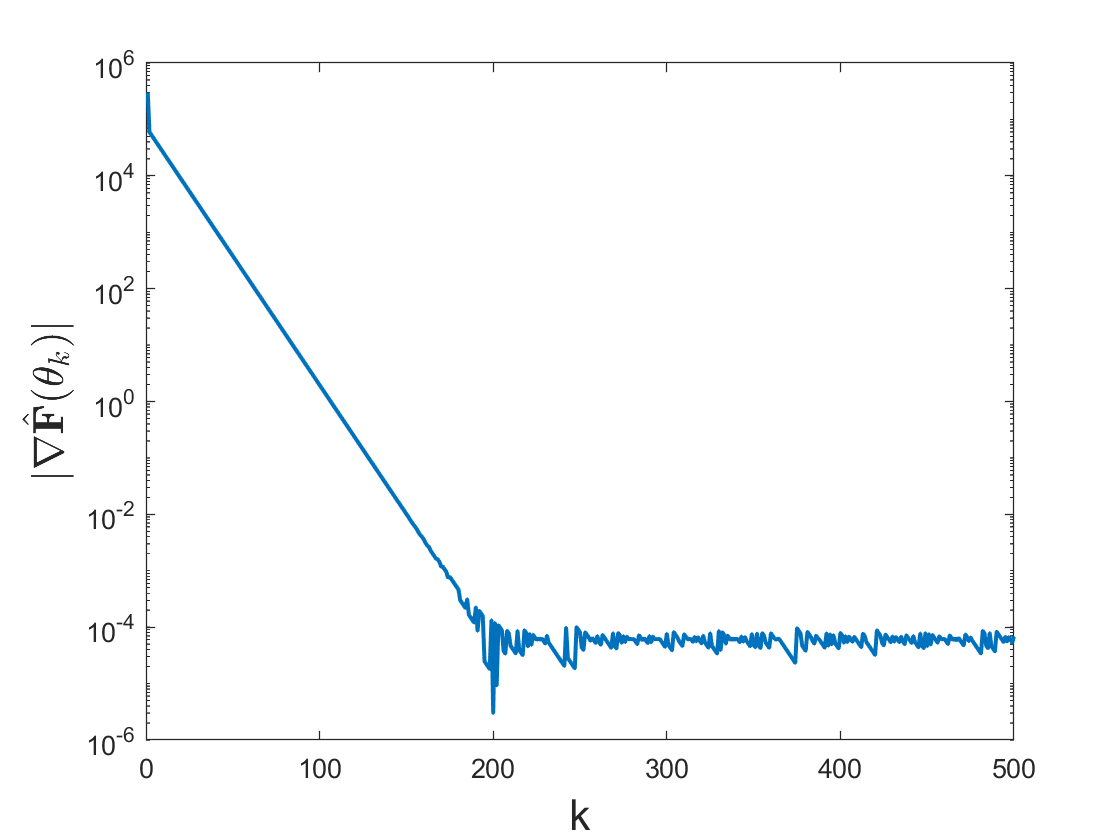}
\caption{The trajectory of $|\nabla\hat{\mathbf{F}}(\theta_k)|$.\label{fig:2}}
\end{figure}

\vspace{-0.5cm}

\begin{figure}[H]
\centering
\includegraphics[width=8cm,height=5.5cm]{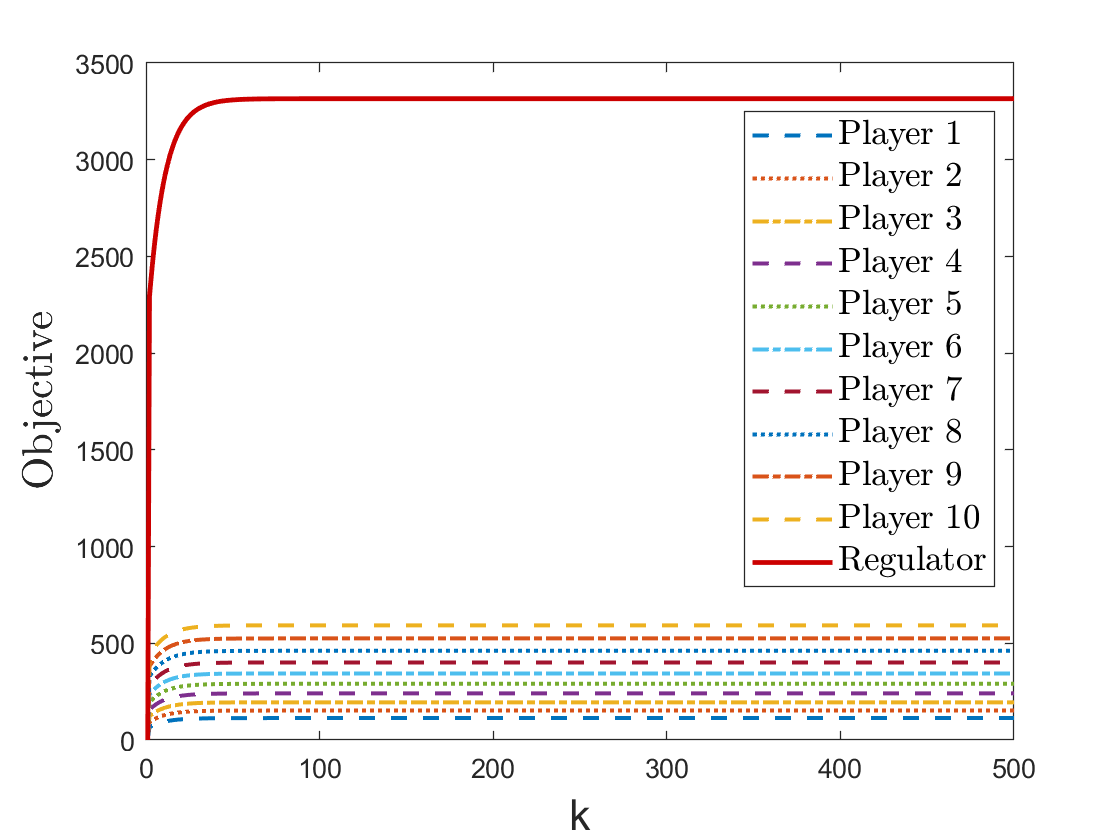}
\caption{Trajectories of players' objective values $f_i(x_{\varepsilon_k}(\theta_k),\theta_k)$, $i\in[10]$ and the regulator's objective value $\sum_{i=1}^{10}f_i(x_{\varepsilon_k}(\theta_k),\theta_k)$.\label{fig:3}}
\end{figure}

\section{Conclusion}\label{sec5}
In this work, a new framework inspired by noncooperative games, i.e., social optimization, has been explored, in which each cost function depends not only on all players' strategies, but also on an intervention decision of a high-level regulator. This problem possesses a bilevel structure, where the  low-level players aim to seek the NE associated with the regulator's decision in a noncooperative game, and the goal of the high-level regulator is to minimize the sum of all players' objectives to achieve the social optimum. To address this constrained, nonconvex and nonsmooth problem, a gradient-free method was developed by virtue of the randomized smoothness and Moreau smoothness techniques. It was proved that the designed algorithm converges at a sublinear rate, irrespective of whether the underly
ing game is solved exactly or inexactly. Potential avenues for future research may be on stochastic objective functions and incorporate equality and inequality constraints.

According to Lemma \ref{lem:clarkegra}, any stationary point of the $\xi$-smoothed problem satisfies an approximate stationary property for the original problem. Furthermore, the distance between the stationary point and the feasible set $\Theta$ is $\mathcal{O}(\xi)$. These findings lay the foundation for developing effective schemes to compute approximate stationary points of minimization of $\mathbf{F}$ in nonconvex and nonsmooth regimes.

Next, let us proceed to the algorithm design. In order to deal with the smoothed problem (\ref{pro:opt-smooth}), by virtue of Lemma \ref{lem:gradient-app}, for every $i\in[N]$, a two point sampling gradient approximation of $\hat{F}_i(\theta)$ is
\begin{align*}
\hat{\nabla} F_i(\theta)=\frac{n}{\xi}(f_i(x(\theta+\xi u),\theta+\xi u)-f_i(x(\theta),\theta))u.
\end{align*}
Hence,
\begin{align}\label{alg:exact-gra}
\hat{\nabla} F(\theta)&= \sum_{i=1}^N\hat{\nabla} F_i(\theta)\notag\\
&=\sum_{i=1}^N\frac{n}{\xi}(f_i(x(\theta+\xi u),\theta+\xi u)-f_i(x(\theta),\theta))u.
\end{align}
Based on the preceding discussion, it is impractical to obtain the exact solution $x(\theta)$,  where $x(\theta)$ is the NE of the low-level game (\ref{pro:game}). We carefully address this challenge by introducing inexact evaluation of $x(\theta)$, where players are allowed to compute an $\varepsilon$-approximation solution $x_\varepsilon(\theta)$ satisfying
\begin{align}\label{defn:error}
\E[\|x_\varepsilon(\theta)-x(\theta)\|^2|\theta]\leq\varepsilon.
\end{align}
As a consequence, one needs to consider the inexact zeroth-order gradient, denoted by
\begin{align*}
\hat{\nabla}_{\varepsilon} F_i(\theta)=\frac{n}{\xi}(f_i(x_{\varepsilon}(\theta+\xi u),\theta+\xi u)-f_i(x_{\varepsilon}(\theta),\theta))u.
\end{align*}

\appendix

\section{Proof of Lemma \ref{lem:clarkegra}}\label{proof-lem:clarkegra}
\begin{proof}
The proof of conclusion i) follows from Proposition 2 in \cite{may84}. Conclusion  ii) is inspired by Proposition 2 in \cite{Qiu2023}.  From $\nabla\hat{\mathbf{F}}(\theta)=\ve{0}$, one can obtain
$\nabla\hat{F}(\theta)+\frac{1}{\xi}(\theta-\Pi_{\Theta}[\theta])=\ve{0}$.
Note that
\begin{align*}
\|\nabla\hat{F}(\theta)\|&\leq \sum_{i=1}^{N}\|\nabla\hat{F}_i(\theta)\|\notag\\
&=\frac{n}{\xi}\sum_{i=1}^{N}\|\E_{u \in \mathbb{S}^n}[(F_i(\theta+\xi u)-F_i(\theta))u]\|\notag\\
&\leq \frac{n}{\xi}\sum_{i=1}^{N}\E_{u \in \mathbb{S}^n}[|F_i(\theta+\xi u)-F_i(\theta)|]\notag\\
&\leq nL_F\sum_{i=1}^{N}\E_{u \in \mathbb{S}^n}[\|u\|]\notag\\
&=nNL_F,
\end{align*}
where Lemma \ref{lem:gradient-app}-i) is utilized to obtain the first equality, the second inequality is derived by applying the Jensen's inequality, and the third inequality holds due to Lemma \ref{lem:conti}. Then  this combined with the preceding relation derives $\|\theta-\Pi_\Theta(\theta)\|\leq \xi nNL_F$.
Invoking $\nabla \hat{F}(\theta) \in \partial_{2 \xi} F(\theta)$ and $2\xi\leq\delta$, one has   $\nabla\hat{F}(\theta)\in\partial_\delta F(\theta)$. Hence, in order to get $\ve{0}\in\partial_{\delta} \mathbf{F}(\theta)$, noting $\nabla\hat{F}(\theta)+\frac{1}{\xi}(\theta-\Pi_{\Theta}(\theta))=\ve{0}$, it suffices to verify $\frac{1}{\xi}(\theta-\Pi_{\Theta}(\theta))\in \partial_{\delta}\chi_\Theta(\theta)$. Recall the definition of $\delta$-Clarke generalized gradient of $\chi_{\Theta}$ at $\theta$, then it derives from  $\xi nNL_F\leq \delta$ that $\|\theta-\Pi_{\Theta}(\theta)\|\leq \delta$. Noting the fact that $\partial \chi_{\Theta}(\theta)=\mathrm{N}_\Theta(\theta)$, next we show for $\zeta:=\frac{1}{\xi}(\theta-\Pi_{\Theta}(\theta))$, it holds $\zeta\in \mathrm{N}_\Theta(\Pi_{\Theta}(\theta))$.
In view of the property of the projection operator, one has $(\theta-\Pi_{\Theta}(\theta))^{\T}(\tilde{\theta}-\Pi_{\Theta}(\theta))\leq 0$ for any $\tilde{\theta}\in\Theta$, which leads to $\zeta^{\T}(\tilde{\theta}-\Pi_{\Theta}(\theta))\leq 0$ for any $\tilde{\theta}\in\Theta$. Therefore, it has $\zeta\in \mathrm{N}_\Theta(\Pi_{\Theta}(\theta))$, which completes the proof.
\end{proof}

\section{Proof of Lemma \ref{thm:game}}\label{proof-thm:game}
\begin{proof}
	Note that $x(\tilde{\theta}_k)$ is the NE of the low-level game (\ref{pro:game}) under  $\theta=\tilde{\theta}_k$. Then the proof can be derived following the same line in Theorem 1 and Lemma 2 in \cite{Bianchi21} together with  the definition of the pseudo-gradient mapping $G(x,\theta).$
\end{proof}

\section{Proof of Lemma \ref{lem:conti}}\label{proof:lem:conti}
\begin{proof}
First, prove the Lipschitz continuity of $x(\theta)$. Consider any two vectors $\theta, \theta'\in\R^n$ and the corresponding NEs $x(\theta), x(\theta')\in X$, it follows from (\ref{optcon}) that
\begin{align*}
-G(x(\theta),\theta)\in \mathrm{N}_X(x(\theta)),\ -G(x(\theta'),\theta')\in \mathrm{N}_X(x(\theta')),
\end{align*}
which immediately leads to
\begin{align*}
&\langle-G(x(\theta),\theta), x(\theta')-x(\theta)\rangle\leq 0,\\ &\langle-G(x(\theta'),\theta'), x(\theta)-x(\theta')\rangle\leq 0.
\end{align*}
From the above two inequalities, one has
\begin{align*}
\langle-G(x(\theta),\theta), x(\theta')-x(\theta)\rangle\leq\langle-G(x(\theta'),\theta'), x(\theta')-x(\theta)\rangle,
\end{align*}
and hence
\begin{align*}
&\mu\|x(\theta')-x(\theta)\|^2\notag\\
&\leq \langle G(x(\theta'),\theta)-G(x(\theta),\theta), x(\theta')-x(\theta)\rangle\notag\\
&\leq \langle G(x(\theta'),\theta)-G(x(\theta'),\theta'), x(\theta')-x(\theta)\rangle\notag\\
&\leq \|G(x(\theta'),\theta)-G(x(\theta'),\theta')\|\|x(\theta')-x(\theta)\|\notag\\
&\leq l_\theta\|\theta-\theta'\|\|x(\theta')-x(\theta)\|,
\end{align*}
where the first inequality is due to Assumption \ref{ass:pseudo-gradient}-i) and the last inequality applies  Assumption \ref{ass:pseudo-gradient}-ii).
Therefore, it has
\begin{align}\label{eq:lip-x}
\|x(\theta')-x(\theta)\|\leq \frac{l_\theta}{\mu}\|\theta-\theta'\|.
\end{align}
which verifies the assertion that $x(\theta)$ is Lipschitz continuous.


Subsequently, for any $\theta, \theta'\in\R^n$, one can obtain from the triangle inequality that
\begin{align*}
&|f_i(x(\theta),\theta)-f_i(x(\theta'),\theta')|\notag\\
&\leq|f_i(x(\theta),\theta)-f_i(x(\theta'),\theta)|+|f_i(x(\theta'),\theta)-f_i(x(\theta'),\theta')|\notag\\
&\leq L_x\|x(\theta)-x(\theta')\|+ L_\theta\|\theta-\theta'\|\notag\\
&\leq (L_x\frac{l_\theta}{\mu}+L_\theta)\|\theta-\theta'\|,
\end{align*}
where the second inequality adopts (\ref{F-lips-2}) and Assumption \ref{ass:set}-iv), and the last inequality is based on (\ref{eq:lip-x}). Consequently, the assertion (\ref{eq:imlip}) holds for $L_F=L_x\frac{l_\theta}{\mu}+L_\theta$.
\end{proof}

\section{Proof of Lemma \ref{lem-inex-zero-grad}}\label{proof-lem-inex-zero-grad}
\begin{proof}
Consider that $\theta_k$ is adapted to $\F_k$ and $u_k$ is independent of $\F_k$, in light of (\ref{lem:gradient-app-exp}) in Lemma \ref{lem:gradient-app}, one has
\begin{align}
&\E[\hat{\nabla}F(\theta_k)+\nabla\hat{\chi}_\Theta(\theta_k)|\F_k]\notag\\
&= \E[\frac{n}{\xi}F(x(\theta_k+\xi u_k),\theta_k+\xi u_k)u_k]+\nabla\hat{\chi}_\Theta(\theta_k)\notag\\
&= \nabla \hat{F}(\theta_k) + \nabla\hat{\chi}_\Theta(\theta_k)\notag\\
&= \nabla \hat{\mathbf{F}}(\theta_k).
\end{align}

It follows from the definitions of $\hat{\nabla}_{\varepsilon_k}F_i(\theta_k)$ and $\hat{\nabla}F_i(\theta_k)$, together with (\ref{F-lips-2}) that
\begin{align}\label{equ:1}
&\|\hat{\nabla}_{\varepsilon_k}F_i(\theta_k)-\hat{\nabla}F_i(\theta_k)\|\notag\\
&=\|\frac{n}{\xi}(f_i(x_{\varepsilon_k}(\theta_k+\xi u_k),\theta_k+\xi u_k)-f_i(x_{\varepsilon_k}(\theta_k),\theta_k))u_k\notag\\
&\quad-\frac{n}{\xi}(f_i(x(\theta_k+\xi u_k),\theta_k+\xi u_k)-f_i(x(\theta_k),\theta_k)u_k\|\notag\\
&\leq \frac{n}{\xi}\|(f_i(x_{\varepsilon_k}(\theta_k+\xi u_k),\theta_k+\xi u_k)\notag\\
&\quad -f_i(x(\theta_k+\xi u_k),\theta_k+\xi u_k))u_k\|\notag\\
&\quad +\frac{n}{\xi}\|(f_i(x_{\varepsilon_k}(\theta_k),\theta_k))-f_i(x(\theta_k),\theta_k))u_k\|\notag\\
&\leq \frac{n}{\xi}L_x\|x_{\varepsilon_k}(\theta_k+\xi u_k)-x(\theta_k+\xi u_k)\|\notag\\
&\quad + \frac{n}{\xi}L_x\|x_{\varepsilon_k}(\theta_k)-x(\theta_k)\|.
\end{align}
Hence, it has
\begin{align}\label{equ:1-1}
&\|\hat{\nabla}_{\varepsilon_k}F_i(\theta_k)-\hat{\nabla}F_i(\theta_k)\|^2\notag\\
&\leq 2\frac{n^2}{\xi^2}L_x^2\|x_{\varepsilon_k}(\theta_k+\xi u_k)-x(\theta_k+\xi u_k)\|^2\notag\\
&\quad + 2\frac{n^2}{\xi^2}L_x^2\|x_{\varepsilon_k}(\theta_k)-x(\theta_k)\|^2\notag\\
&\leq 4\frac{n^2}{\xi^2}L_x^2\varepsilon_k.
\end{align}
Utilizing the $c_r$-inequality $\|\sum_{i=1}^sa_i\|^2\leq s\sum_{i=1}^s\|a_i\|^2$, it obtains that
\begin{align*}
&\E[\|\hat{\nabla}_{\varepsilon_k}F(\theta_k)-\hat{\nabla}F(\theta_k)\|^2|\F_k]\notag\\
&=\E[\|\sum_{i=1}^N(\hat{\nabla}_{\varepsilon_k}F_i(\theta_k)-\hat{\nabla}F_i(\theta_k))\|^2|\F_k]\notag\\
&\leq N\sum_{i=1}^N\E[\|\hat{\nabla}_{\varepsilon_k}F_i(\theta_k)-\hat{\nabla}F_i(\theta_k)\|^2|\F_k]
\end{align*}
which yields (\ref{lem-inex-4}) by combining with (\ref{equ:1-1}).

As for (\ref{lem-inex-2}), note that
\begin{align*}
&\E[\|\hat{\nabla}F(\theta_k)+\nabla\hat{\chi}_\Theta(\theta_k)-\nabla \hat{\mathbf{F}}(\theta_k)\|^2|\F_k]\notag\\
&=\E[\|\hat{\nabla}F(\theta_k)+\nabla\hat{\chi}_\Theta(\theta_k)-\nabla\hat{F}(\theta_k)-\nabla\hat{\chi}_\Theta(\theta_k)|\F_k]\notag\\
&=\E[\|\hat{\nabla}F(\theta_k)\|^2|\F_k]+\|\nabla \hat{F}(\theta_k)\|^2 \notag\\
&\quad-2\E[\hat{\nabla}F(\theta_k)^{\T}\nabla \hat{F}(\theta_k)|\F_k]\notag\\
&=\E[\|\hat{\nabla}F(\theta_k)\|^2|\F_k]-\|\nabla \hat{F}(\theta_k)\|^2,
\end{align*}
where the last equality holds due to
\begin{align*}
\E[\hat{\nabla}F(\theta_k)^{\T}\nabla \hat{F}(\theta_k)|\F_k]&=\E[(\hat{\nabla}F(\theta_k))^{\T} |\F_k]\nabla\hat{F}(\theta_k)\notag\\
&=\|\nabla \hat{F}(\theta_k)\|^2.
\end{align*}
Recall from Lemma \ref{lem:conti} that $F(\theta)$ is $NL_F$-Lipschitz continuous. Then (\ref{lem-inex-2}) holds by virtue of (\ref{222}) in Lemma \ref{lem:gradient-app}. Therefore, the proof is completed.
\end{proof}

\section{Proof of Theorem \ref{thm-main-nonconvex}}\label{proof-thm-main-nonconvex}
\begin{proof}
	Note that $F$ is $NL_F$-Lipschitz continuous, then one has from Lemma \ref{lem:gradient-app} that  $\nabla\hat{F}$ is $\frac{nNL_F}{\xi}$-Lipschitz continuous. Moreover, $\nabla\hat{\chi}_\Theta$ is $\frac{1}{\xi}$-Lipschitz continuous. Hence, $\nabla\hat{\mathbf{F}}$ is $\frac{nNL_F+1}{\xi}$-Lipschitz continuous. In view of the update of $\theta_k$ in (\ref{alg:theta-}), it follows from Lemma 5.7 in \cite{Beck2017} that
	\begin{align*}
		&\hat{\mathbf{F}}(\theta_{k+1})\notag\\
		&\leq\hat{\mathbf{F}}(\theta_k)-\alpha \nabla\hat{\mathbf{F}}(\theta_k)^{\T}(\hat{\nabla}_{\varepsilon_k} F(\theta_k)+\nabla\hat{\chi}_\Theta(\theta_k))\notag\\
		&\quad+\frac{nNL_F+1}{2\xi}\alpha^2\|\hat{\nabla}_{\varepsilon_k} F(\theta_k)+\nabla\hat{\chi}_\Theta(\theta_k)\|^2\notag\\
		&=\hat{\mathbf{F}}(\theta_k)-\alpha \nabla\hat{\mathbf{F}}(\theta_k)^{\T}(\hat{\nabla}F(\theta_k)+\nabla\hat{\chi}_\Theta(\theta_k))\notag\\
		&\quad-\alpha \nabla\hat{\mathbf{F}}(\theta_k)^{\T}(\hat{\nabla}_{\varepsilon_k}F(\theta_k)-\hat{\nabla}F(\theta_k))\notag\\
		&\quad+\frac{nNL_F+1}{2\xi}\alpha^2\|\hat{\nabla}F(\theta_k)+\nabla\hat{\chi}_\Theta(\theta_k)\notag\\
		&\quad+\hat{\nabla}_{\varepsilon_k} F(\theta_k)-\hat{\nabla}F(\theta_k)\|^2\notag\\
		&\leq\hat{\mathbf{F}}(\theta_k)-\alpha \nabla\hat{\mathbf{F}}(\theta_k)^{\T}(\hat{\nabla}F(\theta_k)+\nabla\hat{\chi}_\Theta(\theta_k))\notag\\
		&\quad+\frac{\alpha}{4}\| \nabla\hat{\mathbf{F}}(\theta_k)\|^2+\alpha\|\hat{\nabla}_{\varepsilon_k}F(\theta_k)-\hat{\nabla}F(\theta_k)\|^2\notag\\
		&\quad+\frac{nNL_F+1}{\xi}\alpha^2\|\hat{\nabla}F(\theta_k)+\nabla\hat{\chi}_\Theta(\theta_k)\|^2\notag\\ &\quad+\frac{nNL_F+1}{\xi}\alpha^2\|\hat{\nabla}_{\varepsilon_k}F(\theta_k)-\hat{\nabla}F(\theta_k)\|^2,
	\end{align*}
	where the Young's inequality $a_1^Ta_2\leq\frac{\iota}{2}\|a_1\|^2+\frac{1}{2\iota}\|a_2\|^2$ for $\iota=2$ and relation  $\|a_1+a_2\|^2\leq2\|a_1\|^2+2\|a_2\|^2$ have been applied in the last inequality. Taking conditional expectation on both sides of the above inequality with respect to $\F_k$ and exploiting (\ref{lem-inex-3}) in Lemma \ref{lem-inex-zero-grad}, one can obtain
	\begin{align*}
		&\E[\hat{\mathbf{F}}(\theta_{k+1})|\F_k]\notag\\
		&\leq\hat{\mathbf{F}}(\theta_k)-\frac{3\alpha}{4}\| \nabla\hat{\mathbf{F}}(\theta_k)\|^2+2\frac{nNL_F+1}{\xi}\alpha^2\|\nabla \hat{\mathbf{F}}(\theta_k)\|^2\notag\\
		&\quad+2\frac{nNL_F+1}{\xi}\alpha^2\E[\|\hat{\nabla}F(\theta_k)+\nabla\hat{\chi}_\Theta(\theta_k)-\nabla \hat{\mathbf{F}}(\theta_k)\|^2|\F_k]\notag\\
		&\quad+(\frac{nNL_F+1}{\xi}\alpha^2+\alpha)\E[\|\hat{\nabla}_{\varepsilon_k}F(\theta_k)-\hat{\nabla}F(\theta_k)\|^2|\F_k].
	\end{align*}
	Taking expectation on both sides of the aforementioned inequality, then substituting (\ref{lem-inex-2}) and (\ref{lem-inex-4}) in Lemma \ref{lem-inex-zero-grad} into it, and invoking $\alpha\leq\frac{\xi}{4(nNL_F+1)}$, it derives that
	\begin{align*}
		\E[\hat{\mathbf{F}}(\theta_{k+1})]&\leq\E[\hat{\mathbf{F}}(\theta_k)]-\frac{1}{4}\alpha\E[\| \nabla\hat{\mathbf{F}}(\theta_k)\|^2]\notag\\
		&+2N^2n^2L_F^2\frac{nNL_F+1}{\xi}\alpha^2 +\frac{5N^2L_x^2n^2\alpha}{\xi^2}\varepsilon_k.
	\end{align*}
	Taking sum of the above inequality over $k$ from $0$ to $K-1$ and dividing its both sides by $K$, it yields
	\begin{align*}
		&\frac{1}{K}\sum_{k=0}^{K-1}\E[\| \nabla\hat{\mathbf{F}}(\theta_k)\|^2]\notag\\
		&\leq\frac{4(\hat{\mathbf{F}}(\theta_0)-\E[\hat{\mathbf{F}}(\theta_K)])}{\alpha K} + 8N^2n^2L_F^2\alpha\frac{nNL_F+1}{\xi}\notag\\
		&\quad + \frac{20N^2L_x^2n^2\sum_{k=0}^{K-1}\varepsilon_k}{\xi^2K},
	\end{align*}
	which incorporating with the definition of $\hat{\mathbf{F}}^*$ completes the proof.
\end{proof}

\section{Proof of Corollary \ref{corollary-1}}\label{proof-corollary-1}
\begin{proof}
Note that for any $s\in[0,1)$, there holds
\begin{align*}
	\sum_{k=0}^{K-1}\frac{1}{(k+1)^s}&\leq 1+\int_1^{K}\frac{1}{t^s}dt\notag\\
	&=1+\frac{K^{1-s}-1}{1-s}\notag\\
	&\leq\frac{K^{1-s}}{1-s}.
\end{align*}
From $t_k\geq\lceil \frac{-s\ln(k+1)}{\ln q}\rceil$, one can derive   $q^{t_k}\leq \frac{1}{(k+1)^s}$. Hence, it follows from (\ref{defn_error_k}) 
that
\begin{align*}
	\sum_{k=0}^{K-1}\varepsilon_k
	\leq 2(\|{\bf x}_0\|^2+NB_X^2) \frac{K^{1-s}}{1-s}.
\end{align*}
Then substituting the above inequality into (\ref{thm-nonconvex}) and noting the selection of $\alpha$  immediately lead to assertion (\ref{cor:1}).
\end{proof}

\section{Proof of Corollary \ref{cor-2-}}\label{proof-cor-2-}
\begin{proof}
Applying the $\frac{nNL_F+1}{\xi}$-Lipschitz continuity of $\nabla\hat{\mathbf{F}}$ and substituting $\hat{\nabla}_{\varepsilon_k} F(\theta_k)$ in (\ref{alg:theta-}) by $\hat{\nabla} F(\theta_k)$, one can obtain that
\begin{align}\label{cor-1-}
\hat{\mathbf{F}}(\theta_{k+1})&\leq\hat{\mathbf{F}}(\theta_k)-\alpha \nabla\hat{\mathbf{F}}(\theta_k)^{\T}(\hat{\nabla} F(\theta_k)+\nabla\hat{\chi}_\Theta(\theta_k))\notag\\
&\quad+\frac{nNL_F+1}{2\xi}\alpha^2\|\hat{\nabla}F(\theta_k)+\nabla\hat{\chi}_\Theta(\theta_k)\|^2.
\end{align}
Taking conditional expectation with respect to $\F_k$ on both sides of (\ref{cor-1-}) and relying on (\ref{lem-inex-3}) in Lemma \ref{lem-inex-zero-grad}, one has
\begin{align*}
&\E[\hat{\mathbf{F}}(\theta_{k+1})|\F_k]\notag\\
&\leq\hat{\mathbf{F}}(\theta_k)-\alpha\| \nabla\hat{\mathbf{F}}(\theta_k)\|^2+\frac{nNL_F+1}{\xi}\alpha^2\|\nabla \hat{\mathbf{F}}(\theta_k)\|^2\notag\\
&\quad+\frac{nNL_F+1}{\xi}\alpha^2\E[\|\hat{\nabla}F(\theta_k)+\nabla\hat{\chi}_\Theta(\theta_k)-\nabla \hat{\mathbf{F}}(\theta_k)\|^2|\F_k].
\end{align*}
Taking expectation on both sides of the preceding inequality, in view of (\ref{lem-inex-2}) in Lemma \ref{lem-inex-zero-grad} and invoking $\alpha\leq\frac{\xi}{2(nNL_F+1)}$, it derives that
\begin{align}\label{cor-3}
\E[\hat{\mathbf{F}}(\theta_{k+1})]&\leq\E[\hat{\mathbf{F}}(\theta_k)]-\frac{1}{2}\alpha\E[\| \nabla\hat{\mathbf{F}}(\theta_k)\|^2]\notag\\
&\quad+N^2n^2L_F^2\frac{nNL_F+1}{\xi}\alpha^2.
\end{align}
For (\ref{cor-3}), summing over $k=0,\ldots,K-1$ and rearranging the inequality, it yields
\begin{align*}
&\frac{1}{K}\sum_{k=0}^{K-1}\E[\| \nabla\hat{\mathbf{F}}(\theta_k)\|^2]\notag\\
&\leq\frac{2(\hat{\mathbf{F}}(\theta_0)-\E[\hat{\mathbf{F}}(\theta_K)])}{\alpha K} + 2N^2n^2L_F^2\frac{nNL_F+1}{\xi }\alpha.
\end{align*}
The proof is completed by the definition of $\hat{\mathbf{F}}^*$.
\end{proof}

\footnotesize

\bibliographystyle{elsarticle-num}
\bibliography{GameNonSOMAS}

\end{multicols}
\end{document}